\documentclass[review]{elsarticle}

\usepackage{lineno,hyperref}
\usepackage[left=2cm,top=2.5cm,right=2cm,bottom=2.5cm]{geometry}
\usepackage{graphicx,array,delarray}
\usepackage{subfigure}
\usepackage{latexsym}
\usepackage{amscd}
\usepackage{amsfonts, amsmath, amssymb}
\usepackage[utf8]{inputenc}
\usepackage{verbatim}
\usepackage{mathrsfs}
\usepackage{fancyhdr}
\usepackage{float}
\usepackage{palatino}
\usepackage[english]{babel}
\usepackage[usenames]{color}

\modulolinenumbers[5]
%COMANDOS PROPIOS
\newcommand{\RR}{\mathbb{R}}

\newcommand{\R}{\mathbf{R}}

\newtheorem{theorem}{\textbf{Theorem}}

\newtheorem{corollary}[theorem]{\textbf{Corollry}}

\newtheorem{example}{\textbf{Example}}
\newtheorem{definition}[theorem]{\textbf{Definition}}

\newtheorem{lemma}[theorem]{\textbf{Lemma}}

\newtheorem{proposition}[theorem]{\textbf{Proposition}}

\newenvironment{proof}[1][Proof]{\noindent\textbf{#1.} }{\ \rule{0.5em}{0.5em}}

\newcommand{\tto}{\longrightarrow}
\DeclareMathOperator{\N}{\mathbb{N}}
\DeclareMathOperator{\Diam}{Diam}  %\def\Diam{{\rm Diam}}
\newcommand{\mitext}[1]{\quad\text{#1}\quad}

\def\conv{{\rm conv}}

%\journal{Journal of Differential Equations}

%%%%%%%%%%%%%%%%%%%%%%%
%% Elsevier bibliography styles
%%%%%%%%%%%%%%%%%%%%%%%
%% To change the style, put a % in front of the second line of the current style and
%% remove the % from the second line of the style you would like to use.
%%%%%%%%%%%%%%%%%%%%%%%

%% Numbered
%\bibliographystyle{model1-num-names}

%% Numbered without titles
%\bibliographystyle{model1a-num-names}

%% Harvard
%\bibliographystyle{model2-names.bst}\biboptions{authoryear}

%% Vancouver numbered
%\usepackage{numcompress}\bibliographystyle{model3-num-names}

%% Vancouver name/year
%\usepackage{numcompress}\bibliographystyle{model4-names}\biboptions{authoryear}

%% APA style
%\bibliographystyle{model5-names}\biboptions{authoryear}

%% AMA style
%\usepackage{numcompress}\bibliographystyle{model6-num-names}

%% `Elsevier LaTeX' style
\bibliographystyle{elsarticle-num}
%%%%%%%%%%%%%%%%%%%%%%%

\begin{document}

\begin{frontmatter}
\title{Some results of fixed point of non-expansive mappings on asymmetric spaces}

%% or include affiliations in footnotes:

\author[mymainaddress0]{L. Ben\'itez-Babilonia\corref{mycorrespondingauthor}}
\cortext[mycorrespondingauthor]{Corresponding author}
\ead{luisbenitez@correo.unicordoba.edu.co}

\author[mymainaddress]{R. Felipe}
\author[mymainaddress0]{L. Rubio}

%\author[mymainaddress]{Moises Soto-Bajo}
%%%%%%%%%%%%%%%%%%%%%%%%%%%%%%%%%%

\address[mymainaddress0]{Departamento de Matem\'{a}ticas y Estad\'{\i}stica, 
Universidad de C\'{o}rdoba,
Cra $6$, No. $77-305$ 
Monter\'{\i}a, Colombia.}
\address[mymainaddress]{Centro de Investigaci\'on en Matem\'aticas A.C., 
Callej\'on Jalisco s/n Mineral de Valenciana,
Guanajuato, Gto., M\'exico.}

\begin{abstract}
{Some fixed point results of classical theory, such as Banach's Fixed Point Theorem, have been previously extended by other authors to asymmetric spaces in recent years. The aim of this paper is to extend to asymmetric spaces some others fixed point results for contractions, shrinkage maps and non-expansive maps. In fact, a version of Edelstein type theorem (Theorem~\ref{res1}), Schauder type theorem (Theorem~\ref{res2}), and Kirk type theorem (Theorem~\ref{Kirk}) are stated and proved in this new context. In order to do that, classical definitions and results were adapted to this new context. Also, the normal structure in the asymmetric case was considered.}
\end{abstract}

\begin{keyword}
Asymmetric spaces, asymmetric norms, non-expansive maps, fixed points, asymmetric normal structure.
\MSC[2010] {47H10, 46B25, 47H09}% 00-01\sep  99-00
%[MSC Classification]
\end{keyword}

\end{frontmatter}

\section{Introduction}

Asymmetric spaces were first introduced by Wilson in \cite{Wil} as quasi-symmetric spaces. An asymmetric space is a generalization of a metric space where the distance does not satisfy the axiom of symmetry. In this case there are two topologies defined on the same space, the forward topology  $\tau^{f}$ and the backward topology $\tau^{b}$, as can be found in \cite{Kel}, then, for notions such as convergence, completeness and compactness two versions are considered, namely forwards and backwards. In \cite{ColZim},  these concepts were studied in the asymmetric context, and complemented in \cite{Cob1}, where are given the basic results on asymmetric normed spaces. In some cases, these spaces represent a specific topic of interest, for example, within the geometric theory of groups, very interesting results have been found related to exterior spaces, as can be seen in \cite{AlKfBest}.

The study of fixed points for different types of applications, in different types of spaces, with certain geometric and topological properties in classical theory, have been extended to asymmetric spaces. For instance, in \cite{KhoMoMou}, is introduced the notion of a forward and backward contraction to prove the Banach Contraction Principle in asymmetric spaces, and in \cite{Cob2}, a generalization of this theorem is given, which is known as Caristi-Kirk fixed point theorem; also, in \cite{SeMaWar}, are presented some fixed-point results for applications of the $\chi F$ type contractions and applications to fractal theory.

This article is organized as follows. Since our aim is to give a precise and self-contained proofs of fixed point theorems in asymmetric spaces, in Sections~\ref{Sec2} and \ref{Sec3}, a brief exposition of some topological concepts that have been extended to asymmetric spaces and asymmetric normed spaces is included, which are well known and have been widely studied in the references \cite{Alegre}, \cite{Cob1} and \cite{Garcia4}.

In Section \ref{Sec4}, the definition of  normal structure is extended to the context of asymmetric normed spaces, taking into consideration the approach appeared in \cite{GoeKir} about the classic theory of normal structure due to M. Brodski and D. Milman. The  normal structure is a geometric property of convex subsets of a usual Banach space. From this notion some interesting results were obtained in the fixed-point theory of non-expansive maps on Banach spaces. However, as far as we know, in the asymmetric case this type of structure has not received any attention.
We consider that our work is a first step in order to fill this gap.

Finally, in Section \ref{Sec5} the main results are stated and proved. The Banach Contraction Principle, in asymmetric spaces \cite{KhoMoMou} is taken as a starting point, to give another versions of it, which are Theorem~\ref{PCBA2} and Corollary~\ref{coro1}, respectively. Asymmetric versions of classical theorems of fixed point theory are also presented, such as: Edelstein type Theorem (Theorem~\ref{res1}), Schauder type Theorem (Theorem~\ref{res2}), Theorem~\ref{Teo3} and Theorem~\ref{Kirk}.

\section{Asymmetric spaces} \label{Sec2}

In this section, we focus on the basic topological theory derived from the definition of asymmetric distance. See \cite{Men} for a more complete study of this subject.

%%%%%%%%%%%%%%%%%

\begin{definition}[\cite{Cob1}]\label{def1}
Let $X$ be a non-empty set. A function %{$p:X\times X\tto\R$} 
is called an \textbf{asymmetric distance} on $X$ if satisfies the following conditions:
\begin{enumerate}
\item[AD1.] if $x,y\in X$, then $p(x,y)\geq 0$,
\item[AD2.] $p(x,y)=p(y,x)=0$ if and only if $x=y$,
\item[AD3.] if $x,y,z\in X$, then $p(x,z)\leq p(x,y)+p(y,z)$.
\end{enumerate}
In such a case, the pair $(X,p)$ is called an {\bf asymmetric space}.
\end{definition}

Note that, in these spaces, the symmetric condition is not required. That is, $p(x,y)$ and $p(y,x)$ are not necessarily equal. Then, two topologies $\tau^{f}$ and $\tau^{b}$ can be defined in $X$, which are generated by the bases \mbox{$\mathscr{B}_1:=\{B^{f}(x,r):x\in X\mitext{and} r>0\}$} and \mbox{$\mathscr{B}_2:=\{B^{b}(x,r):x\in X\mitext{and} r>0\}$}, respectively (see \cite{Kel}). Here, we have used the following notation:
\begin{align*}
    B^{f}(x,r)&=\{u\in X: p(x,u)<r\} \hfill \quad\text {{for the {\bf open $f-$ball}}},\\
    B^{b}(x,r)&=\{v\in X: p(v,x)<r\} \hfill \quad\text {{for the {\bf open $b-$ball}}}.
\end{align*}
Thus, a set $A\subseteq X$ is {\bf $f-$open} ({\bf $b-$open}) in $X$, if, for each $x\in A$, there is $r>0$ such that $B^{f}(x,r)\subset A$ ($B^{b}(x,r)\subset A$, respectively). See \cite{Cob1} for more details.

If $(X,p_1)$ and $(Y,p_2)$ are two asymmetric spaces and $g:X\tto Y$ is a function, then four different notions of continuity can be defined, according to the considered topologies. However, such continuities are characterized by the sequential continuity, as follows: Let $(X,p)$ be an asymmetric space. The sequence $\{x_n\}\subset X$ is {\bf $f-$convergent} (respectively {\bf $b-$convergent}) to $x_0\in X$, denoted by $x_n\overset{f}{\tto} x_0$ (respectively $x_n\overset{b}{\tto} x_0$), if \mbox{$p(x_0,x_n)\tto0$} (respectively $p(x_n,x_0)\tto0$).

\begin{example}
Consider the function $d:\R\times\R\rightarrow[0,\infty)$ defined by means of
\begin{equation}
d(x,y)=
\begin{cases}
    y-x,  & \quad\text{if} \quad y>x;   \\
    0,  &\quad\text{if} \quad  y\leq x.
  \end{cases}
\end{equation}
Then, $d$ is an asymmetric distance and $(\R,d)$ is an asymmetric space. In this example, if $x<y$, then $d(x,y)>0$ and $d(y,x)=0$. More examples can be found in \cite{ColZim}, \cite{KhoMoMou} and \cite{SeMaWar}.
\end{example}

There are many notion of Cauchy sequence that are related but not equivalent in the asymmetric case, as can be seen in \cite{Cob1}. We present the most convenient to our work.

\begin{definition}[\cite{ColZim}]\label{cauchy}
Let $(X,p)$ be an asymmetric space. A sequence $\lbrace x_{n}\rbrace$ in $X$ is {\bf $f-$Cauchy} (resp.\,{\bf $b-$Cauchy}), if for all $\epsilon>0$ there exists $ N\in \N$ such that for \mbox{$m\geq n\geq N$}, $p(x_{n},x_{m})<\epsilon$ (resp. $p(x_{m},x_{n})<\epsilon$) holds.
 \end{definition}
The order of the subscripts $n$ and $m$ must be considered in asymmetric spaces. Kelly, Collins and Zimmer present some examples of $f-$convergent sequences which are not $f-$Cauchy. See \cite{Kel}, Example 5.8 and \cite{ColZim}, Example 3.6. However, other properties are preserved in asymmetric spaces, as it is showed in the following proposition.

\begin{proposition}[\cite{ColZim}]\label{k12}
Let $(X,p)$ be an asymmetric space and $\left\lbrace x_{n}\right\rbrace$ be a $b-$Cauchy sequence on $X$. If $\left\lbrace x_{n}\right\rbrace$ has an $f-$convergent subsequence, then $\left\lbrace x_{n}\right\rbrace$  $f-$convergent.
\end{proposition}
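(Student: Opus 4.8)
The plan is to show that the full sequence is $f$-convergent to the very same point to which the subsequence converges. Denote by $\{x_{n_k}\}$ the $f$-convergent subsequence and let $x_0 \in X$ be its $f$-limit, so that $p(x_0, x_{n_k}) \to 0$ as $k \to \infty$. The goal is then to establish $p(x_0, x_m) \to 0$ as $m \to \infty$, which is precisely the assertion that $\{x_m\}$ is $f$-convergent to $x_0$.

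First I would fix $\epsilon > 0$ and invoke the two hypotheses separately. From the $b$-Cauchy property there is $N \in \N$ such that $p(x_j, x_i) < \epsilon/2$ whenever $j \geq i \geq N$; note that the larger index sits in the first slot, exactly matching Definition~\ref{cauchy}. From the $f$-convergence of the subsequence there is $K$ such that $p(x_0, x_{n_k}) < \epsilon/2$ for all $k \geq K$. The only remaining ingredient is the triangle inequality AD3, applied with the intermediate point taken along the subsequence: for any index $m$ and any $k$,
\[
p(x_0, x_m) \leq p(x_0, x_{n_k}) + p(x_{n_k}, x_m).
\]

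Then, given $m \geq N$, I would choose $k$ large enough that simultaneously $k \geq K$ and $n_k \geq m$; this is possible because $n_k \to \infty$. With this choice the first summand is below $\epsilon/2$ by $f$-convergence, and the second summand $p(x_{n_k}, x_m)$ is below $\epsilon/2$ by the $b$-Cauchy estimate, since $n_k \geq m \geq N$ places the pair in the admissible range. Hence $p(x_0, x_m) < \epsilon$ for every $m \geq N$, which yields $p(x_0, x_m) \to 0$ and proves the claim.

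The step demanding the most care — and the place where the asymmetry genuinely matters — is the ordering of subscripts. Because $p$ need not be symmetric, the $b$-Cauchy bound controls $p(x_{n_k}, x_m)$ only when $n_k \geq m$, and not the reverse; likewise the triangle inequality must be arranged so that the backward-Cauchy term appears in the exact order $p(\text{larger index}, \text{smaller index})$ supplied by Definition~\ref{cauchy}, while the subsequence contributes $p(x_0, x_{n_k})$ in the order dictated by $f$-convergence. Once these orientations are lined up, the argument reduces to the classical pattern ``Cauchy plus a convergent subsequence implies convergent,'' and no symmetry or completeness is needed beyond what is stated.
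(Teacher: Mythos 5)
Your proof is correct and follows essentially the same route as the paper's: split $p(x_0,x_m)\leq p(x_0,x_{n_k})+p(x_{n_k},x_m)$, control the first term by the $f$-convergence of the subsequence and the second by the $b$-Cauchy estimate after choosing $k$ so large that $n_k\geq m$. If anything, your explicit attention to the ordering of subscripts is slightly more careful than the paper's final quantification, but the argument is the same.
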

\begin{proof}
Let $\left\lbrace x_{n_{k}}\right\rbrace\subset \left\lbrace x_{n}\right\rbrace$ be a subsequence such that $\displaystyle{x_{n_{k}} \stackrel{f}{\rightarrow} x_{0}}$. Then, for all $\epsilon>0$, there exists $N_{1}\in \N$ such that
\begin{equation}\label{fss2}
   p(x_{0},x_{n_{k}})<\frac{\epsilon}{2}\quad\text{for all}\quad k\geq N_{1}.
\end{equation}
On the other hand, for all $\epsilon>0$, there exists $N_{2}\in\N$ such that
 \begin{equation}\label{bsC1}
    p(x_{n_{k}},x_{n})<\frac{\epsilon}{2}\quad\text{for all}\quad n_{k}\geq n\geq N_{2}.
 \end{equation}
Let $N_{0}=\max\left\lbrace N_{1},N_{2}\right\rbrace$, $k\geq N_{0}$ and $n_{k}\geq k$. Thus, by (\ref{fss2}) and (\ref{bsC1}),
\begin{equation*}
    p(x_{0},x_{n})\leq p(x_{0},x_{n_{k}})+p(x_{n_{k}},x_{n})<\epsilon.
\end{equation*}
 \end{proof}

It is worth pointing out that compactness and sequential compactness do not always coincide in asymmetric spaces.

\begin{definition}[\cite{ColZim}]\label{comp}
Let $(X,p)$ be an asymmetric space. A subset $K\subset X$ is {\bf $f-$compact} (resp. {\bf $b-$compact}) if every open cover of $K$ in $\tau^{f}$ (resp. $\tau^{b}$) has a finite subcover. We will say that $K$ is {\bf relatively $f-$compact} (resp. {\bf relatively $b-$compact}) if $cl^{f}(K)$ is $f-$compact (resp. $cl^{b}(K)$ is $b-$compact), where $cl^{f}$ denotes the closure in $\tau^{f}$ (resp.  $cl^{b}$ denotes the closure in $\tau^{b}$). Also, $K\subset X$ is {\bf sequentially $f-$compact} (resp. {\bf sequentially $b-$compact}) if every sequence has an $f-$convergent subsequence (resp. $b-$convergent subsequence) with limit at $K$.
\end{definition}

Note that different types of Cauchy sequences provide diverse types of completeness in asymmetric spaces, each with its advantages and disadvantages. Thus, the next definition is related to Definition~\ref{cauchy}.
\begin{definition}[\cite{ColZim}]
An asymmetric space $(X,p)$ is {\bf $f-$complete} (resp.  {\bf $b-$complete}), if every $f-$Cauchy sequence is $f-$convergent (resp. $b-$Cauchy sequence is $b-$convergent) on $X$. We say that a subset $K\subset X$ is {\bf totally $f-$bounded} (resp.  {\bf totally $b-$bounded)}, if for every $\epsilon>0$, $K$ can be covered by a finite number of $f-$open balls (resp. $b-$open balls) of radius $\epsilon$.
\end{definition}

On the other hand, if $(Y,d)$ is a metric space, then clearly $(Y,\tau^{f})=(Y,\tau^{b})$. Thus, considering an asymmetric space $(X,p)$, we have two types of continuities for a function $g:X\tto Y$: $f-$continuity and $b-$continuity. Then, in the particular case when $(Y,d_{Y})=(\RR,\vert \cdot\vert)$, we have the following results\,:

\begin{proposition}\label{prop4}
Let $(X,p)$ be an $f-$compact asymmetric space and consider $\RR$ with the usual metric. If $f:X\to\RR$ is $f-$continuous on $X$, then $f(X)$ is compact on $\RR$.
\end{proposition}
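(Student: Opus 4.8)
The plan is to mimic the classical proof that the continuous image of a compact space is compact, being careful to use the open-cover formulation of $f$-compactness from Definition~\ref{comp} rather than sequential compactness, since the excerpt explicitly warns that these two notions need not coincide in the asymmetric setting.

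First I would pin down the precise meaning of $f$-continuity in this situation. Because the target $(\RR,|\cdot|)$ is a metric space, its forward and backward topologies agree with the usual one, so $f$-continuity of $f:X\to\RR$ means exactly that $f$ is continuous as a map from $(X,\tau^{f})$ to $\RR$ with the usual topology; equivalently, for every open $V\subseteq\RR$ the preimage $f^{-1}(V)$ is $f$-open in $X$. If one prefers to start from the sequential characterization of continuity recalled in Section~\ref{Sec2}, the two-way equivalence between sequential $f$-continuity and topological $f$-continuity yields the same preimage property.

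Next I would run the standard covering argument. Let $\{V_{i}\}_{i\in I}$ be an arbitrary cover of $f(X)$ by open subsets of $\RR$. By the preimage property each $f^{-1}(V_{i})$ is $f$-open in $X$, and since the $V_{i}$ cover $f(X)$ the family $\{f^{-1}(V_{i})\}_{i\in I}$ is an open cover of $X$ in $\tau^{f}$. Invoking the $f$-compactness of $(X,p)$, I extract a finite subfamily $f^{-1}(V_{i_{1}}),\dots,f^{-1}(V_{i_{n}})$ that still covers $X$. Pushing forward, the sets $V_{i_{1}},\dots,V_{i_{n}}$ then cover $f(X)$, so every open cover of $f(X)$ admits a finite subcover; that is, $f(X)$ is compact in $\RR$.

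The only genuinely delicate point is the first step: one must make sure that the notion labelled $f$-continuity really delivers $f$-open preimages of open sets, and that $f$-compactness is read in the open-cover sense. Once these are fixed, the argument is purely formal and never touches the asymmetry of $p$ directly, since the asymmetry is entirely absorbed into the topology $\tau^{f}$. In particular, no appeal to sequences, $b$-Cauchy behaviour, or Proposition~\ref{k12} is required, which is precisely what one wants here because sequential $f$-compactness and $f$-compactness are not interchangeable in asymmetric spaces.
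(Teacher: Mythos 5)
Your argument is correct and is precisely the proof the paper has in mind: the paper omits the proof of Proposition~\ref{prop4}, remarking only that it is identical to the symmetric case up to obvious modifications, and your open-cover argument is exactly that adaptation. Your care in reading $f$-compactness in the open-cover sense (rather than sequentially) and $f$-continuity as continuity from $(X,\tau^{f})$ to $\RR$ with the usual topology is the right way to make those ``obvious modifications'' precise.
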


Since the proof of the preceding proposition is, except for obvious modifications, identical to that in the symmetric framework, it will be omitted here. \\

The next proposition is an adaptation to asymmetric spaces of the Weierstrass Theorem.
In this case, the result is addressed using a function $f$ that maps a compact asymmetric space to a metric space.

\begin{proposition}\label{prop6}
Let $(X,p)$ be an $f-$compact asymmetric space and let us consider $\RR$ provided with the usual metric. Suppose that $f:X\to\RR$ is $f-$continuous on $X$ and
\begin{equation}
   M=\sup_{x \in X} f(x), \quad m=\inf_{x\in X}f(x).
\end{equation}
Then, there exist $x_1,x_2\in X$ such that $f(x_1)=M$ and $f(x_2)=m$.
\end{proposition}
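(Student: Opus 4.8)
The plan is to reduce the statement to the classical Weierstrass argument by first transferring the $f$-compactness of $X$ into ordinary compactness of the image $f(X)\subseteq\RR$. Concretely, since $X$ is $f$-compact and $f$ is $f$-continuous, Proposition~\ref{prop4} applies verbatim and yields that $f(X)$ is a compact subset of $\RR$ endowed with its usual (symmetric) metric. Once this step is taken, the asymmetry of $X$ plays no further role: everything that follows takes place inside $\RR$, where the forward and backward notions coincide.

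Next I would invoke the Heine--Borel theorem to conclude that $f(X)$ is closed and bounded in $\RR$. Boundedness guarantees that $M=\sup_{x\in X}f(x)=\sup f(X)$ and $m=\inf_{x\in X}f(x)=\inf f(X)$ are finite real numbers; here I use that $X$ is non-empty, as required in Definition~\ref{def1}, so that $f(X)\neq\emptyset$ and the supremum and infimum are genuinely defined. It then remains to see that these two values are attained. For $M$, the defining property of the supremum produces, for every $n\in\N$, an element $y_n\in f(X)$ with $M-\tfrac{1}{n}<y_n\le M$, so that $y_n\to M$ in $\RR$; since $f(X)$ is closed, the limit $M$ belongs to $f(X)$. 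The same argument applied to the infimum gives $m\in f(X)$. Writing $M=f(x_1)$ and $m=f(x_2)$ for suitable $x_1,x_2\in X$ finishes the proof.

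I do not expect a genuine obstacle here: the whole content of the statement is concentrated in the passage from $f$-compactness of $X$ to compactness of $f(X)$, which is exactly Proposition~\ref{prop4}. The only delicate point one must keep in mind is that the hypotheses be matched correctly---$f$-compactness of the domain together with $f$-continuity of the map---so that Proposition~\ref{prop4} is applicable; after that, the extreme value reasoning is identical to the symmetric case, precisely because the codomain $\RR$ carries a genuine (symmetric) metric.
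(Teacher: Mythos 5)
Your proposal is correct and follows exactly the paper's route: the paper's proof consists of the single line ``It follows immediately from Proposition~\ref{prop4},'' and your argument simply spells out the details of that deduction (compactness of $f(X)$ via Proposition~\ref{prop4}, then Heine--Borel and attainment of supremum and infimum in a closed bounded subset of $\RR$). No discrepancy to report.
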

\begin{proof}
It follows immediately from Proposition~\ref{prop4}.
\end{proof}

\section{Asymmetric Functional Analysis }\label{Sec3}

In the present section, we introduce the asymmetric normed spaces, which we denote by $(X,\Vert \cdot \vert)$. Most of the results and the theory presented here are based on the works \cite{Alegre}, \cite{Cob1} and \cite{Garcia4}.

\begin{definition}[\cite{Cob1}]\label{NA}
Let $X$ be a real vector space. We say that $ \Vert \cdot \vert: X \to \RR$ is {\bf an asymmetric norm} on $ X $ if it satisfies the following properties:
\begin{enumerate}
\item[AN1.] For each $ x \in X,  \Vert x \vert\geq 0$.
\item[AN2.] For each $ x \in X$, $\Vert x \vert=\Vert -x \vert =0$ if and only if, $x = 0$.
\item[AN3.] For each $\lambda\geq 0 $ and each $ x \in X$, $\Vert \lambda x \vert = \lambda \Vert x \vert$.
\item[AN4.] For each $x, y \in X$, $\Vert x + y \vert\leq \Vert x \vert + \Vert y \vert$.
\end{enumerate}
In this case, the pair $(X,\Vert\cdot\vert)$ is called {\bf an asymmetric normed space}. 
\end{definition}
\begin{example}\label{NAE!}
The function $\Vert\cdot \vert:\RR^{2}\to\RR$ defined by $\Vert(x,y)\vert=\max\left\lbrace 0,y-x,y+x\right\rbrace $ is an asymmetric norm in $\RR^2$. We refer the reader to \cite{Cob1} for more details.
\end{example}

\begin{example}[\cite{Cob1}]
The function $\Vert \cdot\vert_{u}:\RR\to \RR^{+}$ defined by
\begin{equation}\label{upper}
\Vert x\vert_{u}=x\vee 0=\max\left\lbrace x,0\right\rbrace
\end{equation}
is an asymmetric norm.
\end{example}
An asymmetric norm induces two asymmetric distances, namely \mbox{$d(x,y)=\Vert y-x\vert$} and $\hat{d}(x,y)=\Vert x-y\vert$.
\begin{definition}[\cite{Cob1}]\label{BAsim}
The space $(X,\Vert \cdot\vert)$ is said to be {\bf $f-$Banach} if it is $f-$complete in the asymmetric distance given by $d(x,y)=\Vert y-x\vert$. Similarly, the space $(X,\Vert \cdot\vert )$ is {\bf $b-$Banach} if it is $b-$complete in the asymmetric distance given by $\hat{d}(x,y)=\Vert x-y\vert$. 
\end{definition}

Inspired by the definition of forward and backward contractions given in \cite{KhoMoMou}, we give the following definition.
\begin{definition}\label{df2}
Let $(X,d)$ be an asymmetric space. A mapping $T: X\to X$ is {\bf $f-$Lipschitz} if there exists a non-negative real number $k$ such that
\begin{equation}\label{Lip1}
        d(Tx,Ty)\leq kd(x,y),\quad \text{for all}\quad  x,y\in X.
\end{equation}
The smallest value of $k$ in (\ref{Lip1}) will be called the {\bf Lipschitz $f-$constant} of $T$ and is denoted by $k_{f}$.

Similarly, a mapping $T: X\to X$ is {\bf $b-$Lipschitz} if there exists a non-negative real number $l$ such that
\begin{equation}\label{Lip2}
d(Tx,Ty)\leq ld(y,x),\quad \text{for all}\quad  x,y\in X.
\end{equation}
The smallest value of $l$ in (\ref{Lip2}) is called the {\bf Lipschitz $b-$constant} of $T$ and denoted by $l_b$.
 \end{definition}
According to the above, we give the following definitions.
\begin{enumerate}
  \item[$\bullet$] The mapping $T$ is {\bf $f-$non-expansive} if $0\leq k_f\leq 1$; and it is a {\bf $b-$non-expansive} if $0\leq l_b\leq 1$.
  \item[$\bullet$] The mapping $T$ is an {\bf $f-$contraction} if $0\leq k_f<1$; and is a {\bf $b-$contraction} if $0\leq l_b<1$.
\end{enumerate}
Furthermore, the mapping $T:X\to X$ is called {\bf $f-$shrinkage} if % shrinkage
\begin{equation}\label{fct}
    d(Tx,Ty)< d(x,y);\quad\text{for all}\quad x,y\in X,\quad\text{con}\quad x\neq y.
\end{equation}
Similarly, $T:X\to X$ is called {\bf $b-$shrinkage} if %shrinkage
\begin{equation}\label{bct}
    d(Tx,Ty)< d(y,x);\quad\text{for all}\quad x,y\in X,\quad\text{con}\quad x\neq y.
\end{equation}
The details of the following example can be found in (\cite{KhoMoMou}).
\begin{example}\label{daa1}
Let $d_1:\RR\times\RR\rightarrow[0,\infty)$ be a function defined of the following form
\[
d_1(x,y)=\left\{\begin{array}{lll}
y-x, &\text{if} & y \geq x ; \\
\frac{1}{4}(x-y), &\text{if}&x>y.
\end{array}\right.
\]
Then $(\mathbb{R},d_1)$ is an asymmetric space and the mapping $T:\mathbb{R}\to\mathbb{R}$, given by $Tx=\frac{1}{2}x$, is an $f-$contraction but not a $b-$contraction, where $\RR$ is endowed with $d_1$ in both cases.
\end{example}

\subsection{Weak topology in asymmetric normed spaces}

In this new context we can introduce the weak topology. The weak topology and the topology of the asymmetric norm coincide in a wide class of finite-dimensional asymmetric normed spaces \cite{Alegre}. However, this is not necessarily true for every asymmetric space. In fact, it can be seen that the weak topology on an infinite-dimensional asymmetric normed space is strictly coarser than the topology of the asymmetric norm.

If $(X,\Vert\cdot\vert)$ is an asymmetric normed space, $X^{\ast}$ denotes the set
\begin{equation}
X^{\ast}=\left\lbrace \varphi:(X,\Vert \cdot\vert)\to (\RR,\Vert \cdot\vert_{u})\,:\, \text{$\varphi$ is linear and continuous}\right\rbrace .
\end{equation}
Then $X^{\ast}$ is the topological asymmetric dual of the asymmetric normed space  $(X,\Vert\cdot\vert)$.

\begin{definition}[\cite{Alegre}]
Let $(X,\Vert \cdot\vert)$ be an asymmetric normed space. The {\bf weak forward topology} on $X$, induced by the asymmetric norm $\Vert\cdot\vert$, is the topology generated by the base 
\begin{align*}
  \mathscr{B}_{\ast}:=\{V_{\varphi_{1}, \varphi_{2}, \ldots, \varphi_{n}}(x, \epsilon):x\in X,\quad \epsilon>0 \mitext{and} \varphi_{1}, \varphi_{2}, \ldots, \varphi_{n}\in X^{\ast}\},
\end{align*}
where \mbox{$V_{\varphi_{1}, \varphi_{2}, \ldots, \varphi_{n}}(x, \epsilon)=x+V_{\varphi_{1}, \varphi_{2}, \ldots, \varphi_{n}}(0, \epsilon)$}, and 
\begin{equation}
V_{\varphi_{1},\varphi_{2},\ldots,\varphi_{n}}(0,\epsilon)=\left\{ z\in X\, :\, \varphi_{1}(z)<\epsilon,\varphi_{2}(z)<\epsilon,\ldots,\varphi_{n}(z)<\epsilon\right\},
\end{equation}
whenever $\varphi_{1}, \varphi_{2}, \ldots, \varphi_{n}\in X^{\ast}$, $\epsilon >0$ and $n\in \N$.

The {\bf weak forward topology} induced by $\Vert\cdot\vert$ will be denoted by $\tau_{w}^{f}$ and the {\bf weak backward topology} induced by $\Vert\cdot\vert$ will be denoted by $\tau_{w}^{b}$.
\end{definition}

\begin{definition}[\cite{Alegre}]
Let $(X,\Vert \cdot\vert)$ be an asymmetric normed space. The sequence \mbox{$\left\lbrace x_{n}\right\rbrace\subset X$} is {\bf weakly $f-$convergent} to $x_{0}\in X$, if the sequence $\left\lbrace \varphi(x_{n})\right\rbrace$ is $f-$convergent to $\varphi(x_0)$ for all $\varphi\in X^{\ast}$, that is, we have
\begin{equation}
\Vert \varphi(x_{n})-\varphi(x_{0})\vert_{u}\to 0,\,\forall \varphi\in X^{\ast}.
\end{equation}
The weak $f-$convergence will be denoted by $\displaystyle{x_{n}\stackrel{f}{\rightharpoonup} x_{0}}$.
\end{definition}

Let $(X,\Vert \cdot\vert)$ be an asymmetric normed space. The sequence $\left\lbrace x_{n}\right\rbrace\subset X$ is  {\bf  strongly $f-$convergent} if it is $f-$convergent in the asymmetric norm. Below, we will call it simply $f-$convergence and write $\displaystyle{x_{n} \stackrel{f}{\rightarrow} x_{0}}$ when no confusion can arise. In an analogous way, the {\bf strong $b-$convergence} is defined.

Next, we present an asymmetric version of Mazur's theorem whose proof is analogous to the classical version which can be consulted in \cite{Yos}.
\begin{theorem}[Mazur-type theorem]\label{Mazur}
Let $X$ be an asymmetric normed space and \mbox{$\lbrace x_{n}\rbrace\subset X$} be a weakly $f-$convergent sequence to $x_{0}\in X$. Then, for all $\epsilon>0$, there exists a convex combination $\displaystyle y_{m}=\sum_{j=1}^{m}\alpha_{j}x_{n_{j}}$ such that $\Vert x_{0}-y_{m}\vert\leq \epsilon$.
\end{theorem}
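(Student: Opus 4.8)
The plan is to follow the classical route through the geometric Hahn--Banach separation theorem, transported to the asymmetric setting and read off with the orientation of inequalities that the asymmetry forces. Set $C=\conv\{x_{n}:n\in\N\}$. I would observe first that the conclusion is exactly the assertion that $x_{0}$ lies in the $b$-closure of $C$: since $\Vert x_{0}-y\vert=d(y,x_{0})$, a convex combination $y_{m}$ with $\Vert x_{0}-y_{m}\vert\le\epsilon$ is precisely a point of $C$ inside the $b$-ball $B^{b}(x_{0},\epsilon)$, and producing such $y_{m}$ for every $\epsilon>0$ means $x_{0}\in cl^{b}(C)$. Thus it suffices to prove $x_{0}\in cl^{b}(C)$.

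Second, I would argue by contradiction and separate. Suppose $x_{0}\notin cl^{b}(C)$ and put $K=cl^{b}(C)$. A short lemma shows $K$ is convex: if $a,b\in K$ and $t\in[0,1]$, choose $a_{k},b_{k}\in C$ with $a_{k}\bc a$ and $b_{k}\bc b$ (the topology $\tau^{b}$ is first countable, so its closure is sequential), and then AN3 and AN4 give $\Vert(ta+(1-t)b)-(ta_{k}+(1-t)b_{k})\vert\le t\Vert a-a_{k}\vert+(1-t)\Vert b-b_{k}\vert\to0$, whence $ta+(1-t)b\in K$. Since $K$ is $b$-closed and $x_{0}\notin K$, there is $\rho>0$ with $B^{b}(x_{0},\rho)\cap K=\emptyset$; the ball $B^{b}(x_{0},\rho)$ is itself convex (again by AN3, AN4) and $b$-open. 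At this point I would invoke the asymmetric Hahn--Banach separation theorem of \cite{Alegre,Cob1} to produce $\varphi\in X^{\ast}$ and $\gamma\in\RR$ separating these disjoint convex sets, oriented so that $x_{0}$ falls on the lower side, namely $\varphi(x_{0})<\gamma\le\varphi(y)$ for every $y\in K$. Setting $\delta=\gamma-\varphi(x_{0})>0$ yields $\varphi(x_{n})\ge\varphi(x_{0})+\delta$ for all $n$, because each $x_{n}\in C\subseteq K$.

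Finally, this contradicts the weak $f$-convergence. By definition $\Vert\varphi(x_{n})-\varphi(x_{0})\vert_{u}=\max\{\varphi(x_{n})-\varphi(x_{0}),0\}\ge\delta>0$ for every $n$, so $\Vert\varphi(x_{n})-\varphi(x_{0})\vert_{u}\not\to0$, contradicting $x_{n}\stackrel{f}{\rightharpoonup}x_{0}$. Hence $x_{0}\in cl^{b}(C)$, and the desired convex combinations exist for every $\epsilon>0$.

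I expect the main obstacle to be the separation step and the bookkeeping of directions around it. In the asymmetric theory $X^{\ast}$ is only a cone (its elements satisfy $\varphi\le M\Vert\cdot\vert$, while $-\varphi$ need not be continuous), so the orientation of the separating inequality is not free to reverse, and one must verify that the separation of the $b$-open ball $B^{b}(x_{0},\rho)$ from $K$ can genuinely be realized by a functional of the forward dual $X^{\ast}$ with $x_{0}$ on the smaller side. Reconciling the forward continuity of the elements of $X^{\ast}$ with the $b$-open geometry of the ball is the delicate point, and it is where I would lean most heavily on the asymmetric Hahn--Banach machinery of \cite{Alegre,Cob1}; by contrast, the convexity of $cl^{b}(C)$ and the final contradiction are routine.
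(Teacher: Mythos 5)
Your reduction of the conclusion to $x_{0}\in cl^{b}\bigl(\conv\{x_{n}\}\bigr)$ is correct, and so is your observation that a contradiction with weak $f$-convergence can only come from a functional $\varphi\in X^{\ast}$ with $\varphi(x_{n})\geq\varphi(x_{0})+\delta$ for all $n$, since $\Vert\varphi(x_{n})-\varphi(x_{0})\vert_{u}=\max\{\varphi(x_{n})-\varphi(x_{0}),0\}$ only detects the positive part. But the step you defer to the asymmetric Hahn--Banach machinery is exactly the one that fails: a separation of $B^{b}(x_{0},\rho)$ from $K=cl^{b}(C)$ by some $\varphi\in X^{\ast}$ with $x_{0}$ on the strictly \emph{smaller} side is not available in general. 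Every $\varphi\in X^{\ast}$ satisfies $\varphi\leq M\Vert\cdot\vert$ for some $M\geq0$, which only ever yields upper bounds of the form $\varphi(x_{0})-\varphi(y)\leq M\Vert x_{0}-y\vert$, i.e.\ it pushes $C$ to the \emph{low} side of $x_{0}$; the geometric separation theorems in this setting put the $\tau^{f}$-open convex set on the strict side, whereas your ball $B^{b}(x_{0},\rho)$ is $\tau^{b}$-open, and reversing the orientation would require $-\varphi\in X^{\ast}$, which fails because $X^{\ast}$ is only a cone.

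The obstruction is not cosmetic: with the paper's definitions the statement itself is false, so no choice of separation argument can close the gap. Take $X=\RR$ with $\Vert t\vert_{u}=\max\{t,0\}$ (the paper's own example), $x_{n}=0$ for all $n$, and $x_{0}=1$. Here $X^{\ast}=\{t\mapsto ct:c\geq0\}$, so $\Vert\varphi(x_{n})-\varphi(x_{0})\vert_{u}=\max\{-c,0\}=0$ for every $\varphi\in X^{\ast}$ and hence $x_{n}\stackrel{f}{\rightharpoonup}1$; yet every convex combination $y_{m}$ equals $0$ and $\Vert x_{0}-y_{m}\vert_{u}=1$. In this example $cl^{b}(C)=(-\infty,0]$, every nonzero $\varphi\in X^{\ast}$ is unbounded below on it, and $\varphi=0$ separates nothing, so the functional you need does not exist. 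For comparison, the paper's own proof goes the other way: it encloses $M_{1}=\conv\{0,x_{1},x_{2},\ldots\}$ in a convex absorbing set $M$ avoiding $x_{0}$, takes the Minkowski functional of $M$, and extends by the analytic Hahn--Banach theorem; but this lands on a functional with $\sup_{n}\varphi_{0}(x_{n})<\varphi_{0}(x_{0})$, i.e.\ with $C$ on the low side, and its final ``which is impossible'' is the mirror image of your missing step, since that inequality does not contradict $\varphi_{0}(x_{n})\fc\varphi_{0}(x_{0})$ in $(\RR,\Vert\cdot\vert_{u})$. A correct version would need a two-sided convergence hypothesis (controlling both $\Vert\varphi(x_{n})-\varphi(x_{0})\vert_{u}$ and $\Vert\varphi(x_{0})-\varphi(x_{n})\vert_{u}$), under which your separation scheme, applied to the forward ball $B^{f}(x_{0},\rho)$ instead, could be made to work.
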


\begin{proof}
Let us consider $M_1={\conv}(\lbrace 0, x_{1},x_2,x_3,\ldots\rbrace)$. Then, we have to prove that, for every $\epsilon>0$, there is $y\in M_1$, such that $\Vert x_{0}-y\vert\leq \epsilon$. In contrary case, there exists $\epsilon_{0}>0$ with $\Vert x_{0}-y\vert>\epsilon_{0}$, for all $y\in M_{1}$. Let us define the following set
\begin{align}
    M=\left\lbrace v\in X\,:\, \Vert v-u\vert\leq \frac{\epsilon_{0}}{2};\quad\text{for some}\quad u\in M_{1}\right\rbrace.
\end{align}
Note that $M\neq \emptyset$,  $M$ is a convex set, $M_{1}\subset M$ and $x_{0}\notin M$. Moreover, for any $x\in X$ there exists $\alpha>0$ such that $\alpha^{-1}x\in M$. In particular, there exists $v_0\in M$ with $v_0\neq0$ and $\beta_0\in(0,1)$ such that $x_0=\beta_0^{-1}v_0$. In this case, we can define $p=p_{M}:X\to \R$ the Minkowski functional of $M$, given by
\begin{align*}
    p(z)=\inf \lbrace t>0\,:\, z\in t M\rbrace, \mitext{for each} z\in X.
\end{align*} 
Thus, $p(v_{0})=1$ and $p(x_{0})\geq \beta_0^{-1}>1$.

Now, let us consider the functional $f_{0}:\R v_{0}\to \R$ defined by $f_{0}(tv_{0})=t$. Then, Hahn-Banach type theorem (Theorem~2.2.2,~\cite{Cob1}) implies the existence of a functional $\varphi_0:X\to \R$ which is an extension of $f_0$, such that
\begin{align}\label{mink}
    \varphi_0(x)\leq p(x), \quad \text{for every}\quad x\in X.
\end{align}
This implies
\begin{align}\label{contraEn}
\sup_{x\in M_{1}}\varphi_0(x)\leq\sup_{x\in M}\varphi_0(x)\leq\sup_{x\in M}p(x)=1<\beta_0^{-1}=\varphi_0\left(\beta_0^{-1} v_{0}\right)=\varphi_0\left(x_{0}\right).
\end{align}
On the other hand, $\displaystyle{x_{n}\stackrel{f}{\rightharpoonup}x_{0}}$, that is $\displaystyle{\phi(x_{n})\overset{f}{\tto}\phi(x_{0})}$ for all $\phi \in X^{\ast}$. In particular, ${\varphi_0(x_{n})\overset{f}{\tto}\varphi_0(x_{0})}$. But, Inequality (\ref{contraEn}) with $\lbrace x_{n}\rbrace\subset M_{1}$ imply
\begin{align}
    \sup_{n\in\N}\varphi_0(x_{n})<\varphi_0\left(x_{0}\right),
\end{align}
which is impossible. Therefore, the result is true. 
\end{proof}

In an asymmetric space $(X,d)$, a set $C\subset X$ is \textbf{weakly $f-$closed} (resp. {\bf weakly $b-$closed}) if it is closed with respect to the topology $\tau_w^{f}$ (resp. $\tau_w^{b}$). Also, it is \textbf{weakly $f-$compact}  (resp. {\bf weakly $b-$compact})  if it is compact with respect to the topology $\tau_w^{f}$ (resp. $\tau_w^{b}$).

\section{Normal structure in asymmetric normed spaces}\label{Sec4}

The concept of normal structure was introduced by Brodskii and Milman. This notion has been useful in the study of fixed points of non-expansive self-mappings on $K$, where $K$ is a weakly compact set. In this section, we introduce this notion in the context of asymmetric spaces.

For any subset $K$ of the set $X$, we define the diameter of $K$ as
\begin{equation}\label{diametro}
\Diam (K)=\sup \{\Vert v-u\vert : u,v \in K\}.
\end{equation}

A set $K\subset X$ is {\bf bounded} if $\Diam(K)<\infty$. Also, we say that a subset $K\subseteq X$ is {\bf $f$-bounded} (resp. {\bf $b$-bounded}) if it is contained in an $f$-ball (resp. in a $b$-ball). Then, any bounded non-empty subset of $X$ is $f$-bounded and $b$-bounded. Indeed, let $x\in K$, then taking into account that $\Diam(K)<\infty$, we have that for all $u\in K$
\begin{equation*}
 \Vert u-x \vert<\Diam(K)+1,
\end{equation*}
that is $K\subset B^{f}(x,r_{0})$ where $r_{0}=\Diam(K)+1$.  Similarly we show that $K\subset B^{b}(x,r_{0})$. Vice versa, if $K$ is contained both in an $f$-ball and a $b$-ball, then this is bounded. \\

Now, if $u\in X$, we define two radiuses of $K$ with respect to $u$ as
\begin{align}\label{radios}
r^{f}_{u}(K)&=\sup \{\Vert v-u\vert : v \in K\};\quad\hfill\text{forward radius of $K$,} \\
r^{b}_{u}(K) &=\sup \{\Vert u-v\vert : v \in K\};\quad\hfill\text{backward radius of $K$.}
\end{align}
Thus, a point $u\in K$ is called \textbf{forward diametral} if
\begin{equation}
 r_{u}^{f}(K)=\Diam(K),
\end{equation}
otherwise, if $r_{u}^{f}(K)<\Diam(K)$, then it is called \textbf{forward non-diametral}.

In general, a set $D\subset X$ is called \textbf{forward diametral} if all its points are forward diametral. In classical theory, the following definition refers to a geometric property of Banach spaces, which is called normal structure. Now, we will present a similar definition in the context of asymmetric normed spaces.

\begin{definition}\label{sn}
Let $X$ be an $f-$Banach space and $K$ be a convex subset of $X$. We say that $K$ has \textbf{forward normal structure}, if every convex bounded subset $H$ of $K$ with $\Diam(H)>0$ contains a forward non-diametral point of $H$, that is, there exists $u\in H$ such that
 \begin{equation}
 r^{f}_{u}(H):=\sup \{\Vert v-u\vert: v \in H\}<\Diam(H).
 \end{equation}
\end{definition}
In other words, the subsets of $X$ with forward normal structure do not contain convex bounded subsets consisting only of forward diametral points, except for those of cardinality one.

Consider $D$ a subset of $K$ and a function $T: K\to K$. The set $D$ is {\bf $T-$invariant} if $T(D)\subseteq D$.

\begin{definition}\label{def31}
Let $K$ be a non-empty, $f-$closed, convex subset of $X$ and $T:K\to K$ be a map. The set $K$ is \textbf{minimal $T-$invariant} if $T(K)\subseteq K$ and $K$ does not have a proper non-empty, $f-$closed, convex subset which is $T-$invariant.
\end{definition}

The proof of the following two results are almost analogous to the symmetric case, under appropriated changes. In \citep[page 33]{GoeKir}, the proofs of the classical versions can be found. 
\begin{proposition}\label{minimal}
Let $X$ be an $f-$Banach space and $K$ be a non-empty, convex, weakly $f-$compact subset of $X$. Then, for any map $T:K\to K$ there exists a minimal $T-$invariant set $D\subseteq K$.
\end{proposition}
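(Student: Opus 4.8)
The plan is to obtain $D$ as a minimal element, with respect to inclusion, of the family of all admissible sets, via Zorn's lemma. Concretely, I would set
\[
\mathcal{F}=\bigl\{\,D\subseteq K : D\neq\emptyset,\ D\ \text{is}\ f\text{-closed and convex, and}\ T(D)\subseteq D\,\bigr\}.
\]
A minimal element of $(\mathcal{F},\subseteq)$ is, by Definition~\ref{def31}, exactly a minimal $T$-invariant set, so the whole task reduces to producing one. The family is non-empty, since $K$ itself lies in $\mathcal{F}$: it is convex and $T$-invariant by hypothesis, and it is $f$-closed because a weakly $f$-compact set is weakly $f$-closed and hence $f$-closed (the weak topology being coarser than that of the asymmetric norm). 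Note that, since $T$ is not assumed to be continuous, it is essential that $K$ be closed to begin with, as we cannot simply pass to the closure of an invariant set.

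To apply Zorn's lemma in the form yielding a minimal element, I would check that every chain in $\mathcal{F}$ has a lower bound in $\mathcal{F}$. Given a chain $\mathcal{C}=\{D_\alpha\}_{\alpha\in A}$, the natural candidate is $D=\bigcap_{\alpha\in A}D_\alpha$. That $D$ is convex and $f$-closed is immediate, being an intersection of convex, $f$-closed sets; and $D$ is $T$-invariant since $x\in D$ forces $x\in D_\alpha$, hence $Tx\in D_\alpha$, for every $\alpha$, so $Tx\in D$.

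The single delicate point is that $D\neq\emptyset$, and this is exactly where the weak $f$-compactness of $K$ is used. First I would show that each $D_\alpha$, being convex and $f$-closed, is in fact weakly $f$-closed; this is a separation argument of the same kind used in the proof of the Mazur-type theorem (Theorem~\ref{Mazur}), namely that a point outside a convex $f$-closed set can be separated from it by a functional of $X^{\ast}$, which exhibits a weak $f$-neighborhood of the point disjoint from the set. Thus each $D_\alpha$ is a weakly $f$-closed subset of the weakly $f$-compact set $K$. Since $\mathcal{C}$ is totally ordered, any finite subcollection $D_{\alpha_1},\dots,D_{\alpha_n}$ has a smallest member, which is non-empty, so $\bigcap_{i=1}^{n}D_{\alpha_i}\neq\emptyset$; that is, $\mathcal{C}$ has the finite intersection property. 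Weak $f$-compactness of $K$ then forces $\bigcap_{\alpha}D_\alpha\neq\emptyset$. Hence $D\in\mathcal{F}$ is the desired lower bound, and Zorn's lemma produces a minimal element of $\mathcal{F}$, i.e. a minimal $T$-invariant subset of $K$.

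The main obstacle I anticipate is precisely the implication ``convex and $f$-closed $\Rightarrow$ weakly $f$-closed'': it is there that one normally invokes symmetry, and one must verify that the Hahn-Banach separation underpinning Theorem~\ref{Mazur} is genuinely available with the asymmetric norm and the asymmetric dual $X^{\ast}$, so that the finite intersection property may be legitimately combined with weak $f$-compactness. A secondary point worth checking is that the weak $f$-topology is well enough behaved (so that weakly $f$-compact sets are weakly $f$-closed) for the compactness argument and the membership $K\in\mathcal{F}$ to hold.
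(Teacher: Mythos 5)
Your proposal is correct and follows the same overall strategy as the paper: order a family of non-empty convex $T$-invariant subsets of $K$ by inclusion, show every chain has a non-empty intersection serving as a lower bound, and invoke Zorn's lemma. The difference lies in the choice of family. The paper takes $\mathcal{P}=\{D\subseteq K : D\neq\emptyset,\ \text{convex, weakly $f$-compact, $T$-invariant}\}$, so that non-emptiness of a chain's intersection follows at once from the finite intersection property inside any one member of the chain; the paper then simply asserts the intersection is a lower bound without spelling this out. You instead take the family of non-empty, convex, \emph{$f$-closed}, $T$-invariant sets -- which is exactly the family implicit in Definition~\ref{def31} -- and must therefore prove that convex $f$-closed sets are weakly $f$-closed before compactness of $K$ can be brought to bear. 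You correctly identify this Mazur-type separation step as the delicate point: in the asymmetric setting the basic weak $f$-neighborhoods $V_{\varphi}(x_0,\epsilon)=\{z:\varphi(z-x_0)<\epsilon\}$ are one-sided, so the Hahn--Banach separation must be arranged with the convex set on the correct side of the functional, and this deserves an explicit argument rather than a citation of the symmetric case. What each approach buys: the paper's choice makes the Zorn argument frictionless but yields a set that is a priori minimal only among weakly $f$-compact invariant subsets, so matching it against Definition~\ref{def31} quietly requires the very same ``convex $+$ $f$-closed $\Rightarrow$ weakly $f$-closed'' fact; your choice front-loads that difficulty and in exchange produces a minimal element of precisely the family named in the definition. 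Neither route avoids the separation lemma; you have merely made visible a step the paper leaves implicit.
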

\begin{proof}
Let us consider the following family of sets
\begin{equation*}
 \mathcal{P}=\left\{D\subseteq K : D\neq\emptyset, \text{ is convex,  weakly $f-$compact and $T-$invariant} \right\}.
\end{equation*}
Note that $(\mathcal{P},\subseteq)$ is a partial ordered set. Moreover, each totally ordered subfamily of $\mathcal{P}$ is lower bounded, in fact, if $\{D_{\alpha}:\alpha\in\Delta\}\subseteq\mathcal{P}$ is totally ordered, then $\displaystyle\bigcap_{\alpha\in\Delta}D_{\alpha}$ is a lower bound. Thus, Zorn's Lemma implies the existence of a minimal set $D_1\in\mathcal{P}$. Therefore, $D_1$ is a minimal $T-$invariant set.
\end{proof}

\begin{lemma}\label{mch}
If $K$ is a minimal $T-$invariant set, then \mbox{$K=\overline{\conv}(T(K))$}.
\end{lemma}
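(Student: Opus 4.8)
The plan is to set $C=\overline{\conv}(T(K))$ and to prove that $C$ is a non-empty, $f-$closed, convex, $T-$invariant subset of $K$. Once this is established, the minimality hypothesis forces $C=K$, which is precisely the assertion. Throughout, $\overline{\conv}(\,\cdot\,)$ denotes the $f-$closure of the convex hull, i.e.\ the smallest $f-$closed convex set containing the given set, so that $C$ is $f-$closed and convex by construction, and non-empty because $K\neq\emptyset$ implies $T(K)\neq\emptyset$.

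First I would check the containment $C\subseteq K$. Since $K$ is $T-$invariant we have $T(K)\subseteq K$, and because $K$ is itself $f-$closed and convex, it contains the smallest $f-$closed convex set over $T(K)$; hence $C\subseteq K$. Next I would verify that $C$ is $T-$invariant: from $C\subseteq K$ together with the $T-$invariance of $K$ we obtain $T(C)\subseteq T(K)$, and since $T(K)\subseteq\overline{\conv}(T(K))=C$, it follows that $T(C)\subseteq C$. Thus $C$ is a non-empty, $f-$closed, convex, $T-$invariant subset of $K$, and Definition~\ref{def31} (no proper non-empty $f-$closed convex $T-$invariant subset) yields $C=K$.

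The single point requiring care in the asymmetric context is that $\overline{\conv}$ is well defined and does produce a convex set, that is, that the $f-$closure of a convex set is again convex and is the smallest $f-$closed convex superset. This reduces to the $f-$continuity of the maps $(x,y)\mapsto\lambda x+(1-\lambda)y$ with $\lambda\in[0,1]$, which follows from AN3 and AN4 since both coefficients $\lambda$ and $1-\lambda$ are non-negative, exactly as in the symmetric framework. I expect this verification of the good behaviour of the closed convex hull, rather than the two containment arguments, to be the main technical obstacle, since everything else is a direct application of the definition of minimality.
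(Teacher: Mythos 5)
Your argument is correct and coincides with the paper's own proof: both set $C=\overline{\conv}(T(K))$, deduce $C\subseteq K$ from the $f-$closedness and convexity of $K$, observe $T(C)\subseteq T(K)\subseteq C$, and conclude by minimality. Your additional remark on why the $f-$closed convex hull is well behaved in the asymmetric setting is a point the paper leaves implicit, but it does not change the route.
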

\begin{proof}
Let us set $D=\overline{\conv}(T(K))$. Note that $D$ is a non-empty, convex, $f-$closed set. Hence, the convexity and the $f-$closedness of $K$ imply \mbox{$D=\overline{conv}(T(K))\subseteq \overline{conv}(K)=K$}. That is, $D\subseteq K$. Thus,
\begin{equation*}
 T(D)\subset T(K)\subseteq \overline{conv}(T(K))=D.
\end{equation*}
This implies that $D$ is $T-$invariant. Finally, since $K$ is minimal, we obtain that $K=D=\overline{conv}(T(K))$.
\end{proof}

\section{Some fixed point results of non-expansive mappings}\label{Sec5}

We will show that $f-$bounded, $f-$closed, and convex sets $K\subset X$ in asymmetric normed spaces have the property that each non-expansive self-mapping $T:K\to K$ has a fixed point. This is obtained by assuming additional conditions on $K$ or $X$.

There are many classic fixed point results which have been extended to asymmetric spaces. A good example is the Banach contraction principle presented by \cite{KhoMoMou}.

\begin{theorem}\label{PCBA1}
Let $(X,d)$ be a complete forward space and $T: X\to X$ be a $f-$contraction. Assume that $f-$convergence implies $b-$convergence. Then, $T$ has a unique fixed point.
\end{theorem}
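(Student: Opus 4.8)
The plan is to mimic the classical Picard iteration argument, paying careful attention to the order of the arguments of $d$, which is the only genuine novelty in the asymmetric setting. First I would fix an arbitrary $x_0\in X$ and define the orbit by $x_{n+1}=Tx_n$. Using the $f-$contraction inequality $d(Tx,Ty)\leq k\,d(x,y)$ with $0\leq k<1$, an immediate induction gives $d(x_n,x_{n+1})\leq k^{n}d(x_0,x_1)$, and then the triangle inequality AD3 yields, for $m\geq n$,
\begin{equation*}
d(x_n,x_m)\leq \sum_{j=n}^{m-1} d(x_j,x_{j+1})\leq \frac{k^{n}}{1-k}\,d(x_0,x_1).
\end{equation*}
Since the right-hand side tends to $0$ as $n\to\infty$, the sequence $\{x_n\}$ is $f-$Cauchy in the sense of Definition~\ref{cauchy}.

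By the $f-$completeness of $(X,d)$ there is $x^{*}\in X$ with $x_n\overset{f}{\tto}x^{*}$, that is, $d(x^{*},x_n)\to 0$. The crux is to show $Tx^{*}=x^{*}$, which by AD2 amounts to establishing both $d(x^{*},Tx^{*})=0$ and $d(Tx^{*},x^{*})=0$. Splitting through $x_{n+1}=Tx_n$ and applying the contraction, I would obtain
\begin{align*}
d(x^{*},Tx^{*}) &\leq d(x^{*},x_{n+1})+k\,d(x_n,x^{*}),\\
d(Tx^{*},x^{*}) &\leq k\,d(x^{*},x_n)+d(x_{n+1},x^{*}).
\end{align*}
Here the terms $d(x^{*},x_{n+1})$ and $d(x^{*},x_n)$ tend to $0$ by $f-$convergence, but the remaining terms $d(x_n,x^{*})$ and $d(x_{n+1},x^{*})$ are distances taken in the backward order. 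This is exactly where the hypothesis that $f-$convergence implies $b-$convergence enters: it gives $x_n\overset{b}{\tto}x^{*}$, i.e. $d(x_n,x^{*})\to 0$, so both right-hand sides vanish in the limit and $x^{*}$ is a fixed point.

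Finally, uniqueness follows from the contraction alone: if $x^{*}$ and $y^{*}$ are fixed points, then $d(x^{*},y^{*})=d(Tx^{*},Ty^{*})\leq k\,d(x^{*},y^{*})$ forces $d(x^{*},y^{*})=0$ since $k<1$, and symmetrically $d(y^{*},x^{*})=0$, whence $x^{*}=y^{*}$ by AD2. The main obstacle, and the reason the extra hypothesis is imposed, is precisely this asymmetry in identifying the limit as a fixed point: the contraction bounds $d(T\cdot,T\cdot)$ only with its arguments in a fixed order, so passing to the limit unavoidably produces a backward distance $d(x_n,x^{*})$ that $f-$convergence does not control; without the bridge from $f-$ to $b-$convergence the argument would stall at this step, while the Cauchy estimate and the uniqueness argument go through unchanged.
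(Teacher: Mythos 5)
Your proof is correct: the Picard iterates, the geometric $f-$Cauchy estimate, the identification of the limit as a fixed point via the two triangle-inequality splittings (with the $f\Rightarrow b$ hypothesis supplying exactly the backward distances $d(x_n,x^{*})$ that $f-$convergence alone does not control), and the uniqueness argument are all sound. Note that the paper does not actually prove Theorem~\ref{PCBA1}, which it quotes from \cite{KhoMoMou}; your argument is the standard one and runs parallel to the paper's own proof of the companion Theorem~\ref{PCBA2}, simplified because an $f-$contraction preserves the order of the arguments at each iteration (so no $\max\{d(x_1,x_0),d(x_0,x_1)\}$ is needed) and completeness replaces the compactness/subsequence step there.
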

In fact, many of these results are built for different types of contractions, as in the case of the Caristi-Kirk theorem, which is a generalization of the Banach contraction principle presented by Cobzas in \cite{Cob2}.

The following result is a consequence of Theorem~\ref{PCBA1}.

\begin{corollary}\label{coro1}
Let $X$ be a complete forward asymmetric space and let $T: X \rightarrow X$ be a mapping such that $T^{k}$ is a $f-$contraction for some $k \in \mathbb{Z}^{+} $. Suppose that $f-$convergence implies $b-$convergence. Then, $T$ has a unique fixed point.
\end{corollary}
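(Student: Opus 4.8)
The plan is to deduce the existence and uniqueness of a fixed point of $T$ from the corresponding properties of $T^k$, which Theorem~\ref{PCBA1} already guarantees. First I would apply Theorem~\ref{PCBA1} to the map $S:=T^k$, which is an $f-$contraction by hypothesis; since $(X,d)$ is a complete forward space and $f-$convergence implies $b-$convergence, the theorem yields a unique $x^{*}\in X$ with $T^k x^{*}=x^{*}$.

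Next I would show that this fixed point of $T^k$ is in fact a fixed point of $T$. The standard trick is to observe that $T x^{*}$ is also a fixed point of $T^k$: applying $T$ to both sides of $T^k x^{*}=x^{*}$ and using $T^k(T x^{*})=T(T^k x^{*})=T x^{*}$, we see that $T x^{*}$ is fixed by $T^k$. By the uniqueness clause of Theorem~\ref{PCBA1} applied to $S=T^k$, any fixed point of $T^k$ coincides with $x^{*}$, hence $T x^{*}=x^{*}$. Thus $x^{*}$ is a fixed point of $T$.

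Finally I would argue uniqueness for $T$ itself: if $y$ satisfies $T y = y$, then iterating gives $T^k y = y$, so $y$ is a fixed point of $T^k$, and again by uniqueness of the fixed point of $T^k$ we conclude $y=x^{*}$. This establishes that $T$ has a unique fixed point.

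The main obstacle I anticipate is purely formal rather than conceptual: one must make sure the hypotheses of Theorem~\ref{PCBA1} genuinely transfer to $S=T^k$. The completeness of $X$ and the implication ``$f-$convergence implies $b-$convergence'' are properties of the ambient space $(X,d)$ and so are unaffected by replacing $T$ with a power of itself; the only map-dependent hypothesis, being an $f-$contraction, is assumed directly for $T^k$. Care is needed only in the asymmetric setting to avoid implicitly using symmetry of $d$ when commuting $T$ with $T^k$ in the step $T^k(Tx^{*})=T(T^k x^{*})$; this relies solely on the associativity of composition and not on any metric symmetry, so it goes through verbatim. No estimates involving $d$ are actually required beyond those already encapsulated in Theorem~\ref{PCBA1}.
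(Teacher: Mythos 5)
Your proposal is correct and follows essentially the same route as the paper: apply Theorem~\ref{PCBA1} to $T^{k}$, observe that $Tx^{*}$ is again a fixed point of $T^{k}$ so that uniqueness forces $Tx^{*}=x^{*}$, and derive uniqueness for $T$ by iterating $Ty=y$ up to $T^{k}y=y$. Your added remark that the space-dependent hypotheses transfer unchanged to $T^{k}$ is a point the paper leaves implicit, but the argument is the same.
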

\begin{proof}
By Theorem~\ref{PCBA1} there exists a unique $x_{0}\in X$ such that $T^{k}x_{0}=x_{0}$, which implies $Tx_{0}=T^{k}\left(T{x_{0}}\right)$. Then, $Tx_{0}$ is also a fixed point of $T^{k}$, and so $x_{0}$ is a fixed point of $T$. In order to prove uniqueness, let us assume that $y \in X$ is also a fixed point of $T$, that is $Ty=y$. Then, we obtain
\begin{equation*}
  y=T y=T\left(T y \right)=T^{2} y=T^{2}\left( Ty\right)=T^{3} y=\cdots =T^{k}y.
\end{equation*}
It follows that $y$ is a fixed point of $T^{k}$, therefore $y=x_{0}$.
\end{proof}

The next theorem has been already stated in \cite{KhoMoMou}. Here, we present it with an alternative proof.

\begin{theorem}[Banach type Theorem]\label{PCBA2}
Let $(X,d)$ be a forward sequentially compact space and let $T: X\to X$ be a $b-$contraction. Assume that $f-$convergence implies $b-$convergence. Then, $T$ has a unique fixed point.
\end{theorem}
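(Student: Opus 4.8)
The plan is to follow the Picard-iteration proof of the Banach principle, but to replace the appeal to completeness by forward sequential compactness. First I would fix an arbitrary $x_{0}\in X$ and set $x_{n+1}=Tx_{n}$. Writing $a_{n}=d(x_{n},x_{n+1})$ and $b_{n}=d(x_{n+1},x_{n})$, the $b$-contraction inequality $d(Tu,Tv)\leq l\,d(v,u)$ with $l=l_{b}<1$, applied to consecutive iterates, yields the coupled estimates $a_{n+1}\leq l\,b_{n}$ and $b_{n+1}\leq l\,a_{n}$. Composing these gives $a_{n+2}\leq l^{2}a_{n}$ and $b_{n+2}\leq l^{2}b_{n}$, so that both $a_{n}\to 0$ and $b_{n}\to 0$ geometrically. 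This is the technical heart of the argument: the gap between successive iterates vanishes in the forward and in the backward distance simultaneously.

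Next I would invoke forward sequential compactness to extract a subsequence $x_{n_{k}}$ with $x_{n_{k}}\overset{f}{\tto}\xi$ for some $\xi\in X$, i.e. $d(\xi,x_{n_{k}})\to 0$; the hypothesis that $f$-convergence implies $b$-convergence then also gives $d(x_{n_{k}},\xi)\to 0$. Using the triangle inequality together with $a_{n_{k}}\to 0$ and $b_{n_{k}}\to 0$, the shifted sequence $x_{n_{k}+1}=Tx_{n_{k}}$ satisfies $d(\xi,x_{n_{k}+1})\leq d(\xi,x_{n_{k}})+a_{n_{k}}\to 0$ and $d(x_{n_{k}+1},\xi)\leq b_{n_{k}}+d(x_{n_{k}},\xi)\to 0$, so it converges to $\xi$ both forwardly and backwardly.

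Then I would show that $\xi$ is a fixed point by estimating the two asymmetric distances between $\xi$ and $T\xi$ separately. For the forward direction, $d(\xi,T\xi)\leq d(\xi,x_{n_{k}+1})+d(Tx_{n_{k}},T\xi)\leq d(\xi,x_{n_{k}+1})+l\,d(\xi,x_{n_{k}})$, and both terms tend to $0$. For the backward direction, $d(T\xi,\xi)\leq d(T\xi,Tx_{n_{k}})+d(x_{n_{k}+1},\xi)\leq l\,d(x_{n_{k}},\xi)+d(x_{n_{k}+1},\xi)$, where $d(x_{n_{k}},\xi)\to 0$ once more by the $f$-to-$b$ hypothesis. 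Hence $d(\xi,T\xi)=d(T\xi,\xi)=0$, and axiom AD2 forces $T\xi=\xi$.

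Uniqueness is then immediate: if $T\xi=\xi$ and $T\eta=\eta$, applying the $b$-contraction bound twice gives $d(\xi,\eta)\leq l\,d(\eta,\xi)\leq l^{2}d(\xi,\eta)$, and symmetrically $d(\eta,\xi)\leq l^{2}d(\eta,\xi)$; since $l^{2}<1$ this forces $d(\xi,\eta)=d(\eta,\xi)=0$, so $\xi=\eta$ by AD2. The hard part will not be any single estimate but the bookkeeping of the asymmetry: one must track the forward and backward distances separately and insert the hypothesis ``$f$-convergence implies $b$-convergence'' at exactly the places where a backward estimate is needed while only forward convergence is available—namely in passing from $x_{n_{k}}\overset{f}{\tto}\xi$ to $d(x_{n_{k}},\xi)\to 0$, and in the control of $d(T\xi,\xi)$.
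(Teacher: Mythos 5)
Your proof is correct. The overall skeleton---Picard iteration, extraction of an $f$-convergent subsequence via forward sequential compactness, and passage to the limit in the contraction inequality---matches the paper's, but you handle the limit passage by a genuinely different mechanism. The paper proves that the whole orbit $\{x_n\}$ is $b$-Cauchy through the telescoping geometric-series bound $d(x_n,x_m)\le \lambda k^m/(1-k)$ and then invokes Proposition~\ref{k12} (a $b$-Cauchy sequence with an $f$-convergent subsequence is $f$-convergent) to upgrade the subsequential limit to $f$-convergence of the entire sequence, after which $d(x,Tx)\le d(x,x_n)+k\,d(x,x_{n-1})\to 0$. You dispense with the Cauchy property altogether: your coupled estimates $a_{n+1}\le l\,b_n$ and $b_{n+1}\le l\,a_n$ give only that the consecutive gaps vanish geometrically, and you recover what you need by showing that the shifted subsequence $x_{n_k+1}=Tx_{n_k}$ converges to the same limit $\xi$ via the triangle inequality. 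This is more economical---no summation of the series, no appeal to Proposition~\ref{k12}---at the cost of concluding only subsequential convergence of the orbit rather than convergence of the full Picard sequence, which is all the existence claim requires. You also place the hypothesis that $f$-convergence implies $b$-convergence exactly where it is needed (to get $d(x_{n_k},\xi)\to 0$ and to control $d(T\xi,\xi)$), and your uniqueness argument, $d(\xi,\eta)\le l^2 d(\xi,\eta)$ together with its mirror image, is cleaner than the paper's, which reaches the same conclusion through a somewhat muddled contradiction.
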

\begin{proof}
Let us take $x_{0}\in X$ and define the following sequence
\begin{equation*}
    x_{1}=Tx_{0},\, x_{2}=Tx_{1}=T^{2}x_{0},\, x_{3}=T^{3}x_{0},\,\ldots,\, x_{n}=T^{n}x_{0},\ldots .
\end{equation*}
Then, there exists $0<k<1$ such that
 \begin{align*}
    d(x_{m+1},x_{m})&=d(Tx_{m},Tx_{m-1})\\
    &\leq kd(x_{m-1},x_{m})=kd(Tx_{m-2},Tx_{m-1})\\
    &\leq k^{2}d\left(x_{m-1},x_{m-2}\right)=k^{2}d\left(Tx_{m-2},Tx_{m-3}\right)\\
    &\leq \cdots \leq\lambda k^{m},
 \end{align*}
with $\lambda=\max\left\lbrace d\left(x_{1},x_{0}\right), d\left(x_{0},x_{1}\right)\right\rbrace$. We will show that the sequence is $b-$Cauchy. Consider $m\leq n$, then
  \begin{align*}
    d(x_{n},x_{m}) &\leq d(x_{n},x_{n-1})+d(x_{n-1},x_{n-2})+\cdots+d(x_{m+1},x_{m})\\
    &\leq k^{n-1}\lambda+k^{n-2}\lambda+\ldots + k^{m}\lambda\\
    &=\left ( k^{m}+k^{m+1}+\ldots + k^{n-2}+k^{n-1}\right)\lambda=\left(\frac{k^{m}-k^{n}}{1-k}\right)\lambda\\
    &\leq \lambda\left(\frac{k^{m}}{1-k}\right).
 \end{align*}
 Since $0<k<1$, then $\displaystyle \frac{k^{m}}{1-k}\to 0$, when $m\to \infty$. Thus
 \begin{equation*}
    d(x_{n},x_{m})\leq \lambda\left(\frac{k^{m}}{1-k}\right)\to 0.
 \end{equation*}

Therefore, $\left\lbrace x_{n}\right\rbrace$ is $b-$Cauchy. By our assumption, there exists a subsequence $\left\lbrace x_{n_{k}}\right\rbrace$ of $\left\lbrace x_{n}\right\rbrace$ which is $f-$convergent, say $\displaystyle{x_{n_{k}} \stackrel{f}{\rightarrow} x}$. Then,  it follows from the Proposition~\ref{k12} that $d(x,x_{n})\to 0$.
  Let us see that $x$ is a fixed point of $T$.
 \begin{equation*}
    d(x,Tx)\leq d(x,x_{n})+d(Tx_{n-1},Tx)\leq d(x,x_{n})+kd(x,x_{n-1}).
 \end{equation*}
So, by letting $n\to \infty$, we have $d(x,Tx)=0$. By a similar argument, we obtain $d(Tx,x)=0$, which implies that $Tx=x$.

In order to prove uniqueness, suppose there exists $y\in X$ a fixed point of $T$ such that $y\neq x$. Then $d(x,y)\neq 0$ or $d(y,x)\neq 0$. Suppose $d(x,y)\neq 0$. Thus,
\begin{equation*}
     d(y,x)=d(Ty,Tx)< kd(x,y)=kd(Tx,Ty)< k^{2}d(y,x).
\end{equation*}
It follows that necessarily $d(y,x)\neq 0$, which implies that $1< k^{2}< 1$, leading to a contradiction.
\end{proof}

Now, we will present the asymmetric version of Edelstein's Theorem. In the symmetric case, this result is obtained by changing the hypothesis that $T$ is a contraction for $T$ a shrinkage mapping in Theorem~\ref{PCBA1}; this requires changing the completeness hypothesis to compactness.

\begin{theorem}[Edelstein type Theorem]\label{res1}
Let $(X,d)$ be an $f-$compact asymmetric space and let $T: X\to X$ be an $f-$shrinkage mapping. If $f-$convergence implies $b-$convergence, then $T$ has a unique fixed point.
\end{theorem}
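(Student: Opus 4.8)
The plan is to transport the classical Edelstein argument to the asymmetric setting by minimizing the displacement functional $\phi(x)=d(x,Tx)$ over the $f$-compact space $X$ and showing that any minimizer must be a fixed point. The only place where the symmetric proof silently uses $d(x_n,x)=d(x,x_n)$ is in the continuity of $\phi$, and it is precisely there that the hypothesis ``$f$-convergence implies $b$-convergence'' will be spent.

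First I would record that $T$ is $f$-continuous: if $x_n\overset{f}{\tto}x$, then $d(x,x_n)\tto 0$, and the shrinkage inequality~(\ref{fct}) gives $d(Tx,Tx_n)<d(x,x_n)\tto 0$ (the inequality degenerating to $0=0$ when $x_n=x$), so $Tx_n\overset{f}{\tto}Tx$. The main step is the $f$-continuity of $\phi$. Fix $x_n\overset{f}{\tto}x$; by hypothesis this convergence is also backward, so $d(x_n,x)\tto 0$, and likewise $Tx_n\overset{f}{\tto}Tx$ yields $d(Tx_n,Tx)\tto 0$. Two applications of the triangle inequality AD3,
\begin{align*}
d(x_n,Tx_n) &\leq d(x_n,x)+d(x,Tx)+d(Tx,Tx_n),\\
d(x,Tx) &\leq d(x,x_n)+d(x_n,Tx_n)+d(Tx_n,Tx),
\end{align*}
combine to give $|\phi(x_n)-\phi(x)|\leq d(x_n,x)+d(x,x_n)+d(Tx,Tx_n)+d(Tx_n,Tx)\tto 0$. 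This is the crux of the whole proof: the backward terms $d(x_n,x)$ and $d(Tx_n,Tx)$ are invisible to forward convergence alone, and only the standing hypothesis brings them under control. I expect this continuity step to be the sole genuine obstacle, since the remainder transcribes verbatim from the compact metric case.

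With $\phi$ now $f$-continuous on the $f$-compact space $X$, Proposition~\ref{prop6} furnishes a point $x_0\in X$ attaining $\min_{x\in X}\phi(x)$. If $x_0\neq Tx_0$, then applying~(\ref{fct}) to the pair $(x_0,Tx_0)$ gives $\phi(Tx_0)=d(Tx_0,T(Tx_0))<d(x_0,Tx_0)=\phi(x_0)$, contradicting minimality; hence $x_0=Tx_0$, so a fixed point exists. For uniqueness, if $x_0,y_0$ were distinct fixed points, then by AD2 at least one of $d(x_0,y_0)$, $d(y_0,x_0)$ is positive, and applying~(\ref{fct}) to the corresponding ordered pair produces the absurdity $d(x_0,y_0)<d(x_0,y_0)$ (or its backward analogue), forcing $x_0=y_0$.
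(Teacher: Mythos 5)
Your proposal is correct and follows essentially the same route as the paper: both minimize the displacement functional $d(x,Tx)$ over the $f$-compact space via Proposition~\ref{prop6}, establish its $f$-continuity by combining the forward and backward triangle inequalities (using the standing hypothesis to control the backward terms), and derive the fixed point and uniqueness from the strict shrinkage inequality. Your write-up is in fact slightly more explicit than the paper's on the continuity estimate and on the uniqueness step, which the paper leaves to the reader.
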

\begin{proof}
Let $g\,:X\to\RR$ be given by $g(z)=d(z,Tz)$. We will prove that $g$ is $f-$continuous in $X$. Let $\lbrace x_{n}\rbrace$ be such that $x_{n} \stackrel{f}{\rightarrow} x$. Then, by inequality (\ref{fct}) we have that $Tx_{n} \stackrel{f}{\rightarrow} Tx$. Also $x_{n} \stackrel{b}{\rightarrow} x$ and $Tx_{n} \stackrel{b}{\rightarrow} Tx$. Let $\epsilon>0$, there exists $N\in \N$ such that if $n\geq N$,
\begin{equation*}
 d(x,x_{n})<\epsilon/2,\quad d(Tx,Tx_{n})<\epsilon/2, \quad d(x_{n},x)<\epsilon/2 \quad\text{and}\quad d(Tx_{n},Tx)<\epsilon/2.
\end{equation*}
Hence, by triangular inequality applied forward and backward,
\begin{equation*}
\left\vert g(x)-g(x_{n})\right\vert= \left\vert d\left(x, Tx\right)-d\left(x_{n}, T x_{n}\right)\right\vert<\epsilon,
\end{equation*}
therefore, $g$ is $f-$continuous.

Now Proposition~\ref{prop6} shows that $g$ takes its minimum value $m$ at $x_{0}\in X$. We show that $x_0$ is a fixed point of $T$. Suppose $x_{0}\neq Tx_{0}$. Then, inequality (\ref{fct}) implies
\begin{equation*}
    m=g(x_{0})\leq g(Tx_{0})=d(Tx_{0},T^{2}x_{0})<d(x_{0},Tx_{0})=m,
\end{equation*}
which is impossible. It remains to see that $x_{0}$ is the unique fixed point, which follows immediately.
\end{proof}

In the following result, we will see that if $T:K\to K$ is an $f-$non-expansive map defined on $K\subset X$, with $X$ an asymmetric space, then, under additional conditions on $K$ or $X$, we can guarantee the existence of fixed points of the mapping $T$. 

\begin{theorem}[Schauder type Theorem]\label{res2}
Let $X$ be an asymmetric normed space and let $K\subset X$ be an non-empty $f-$compact and convex subset. If $T:K\to K$ is $f-$non-expansive, and the $f-$convergence implies $b-$convergence, then $T$ has a fixed point.
\end{theorem}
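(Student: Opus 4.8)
The plan is to approximate the $f$-non-expansive map $T$ by $f$-contractions, produce \emph{approximate} fixed points by iterating these contractions, and then extract a genuine fixed point from a Weierstrass-type minimisation on the $f$-compact set $K$. First I would fix a base point $u\in K$ and, for each $\lambda\in(0,1)$, define $T_{\lambda}\colon K\to K$ by $T_{\lambda}x=\lambda u+(1-\lambda)Tx$. Convexity of $K$ guarantees $T_{\lambda}(K)\subseteq K$, and since $d(T_\lambda x,T_\lambda y)=\Vert(1-\lambda)(Ty-Tx)\vert=(1-\lambda)\,d(Tx,Ty)\le(1-\lambda)\,d(x,y)$ (using axiom AN3 and the $f$-non-expansiveness of $T$), each $T_\lambda$ is an $f$-contraction with constant $c=1-\lambda<1$.

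Next I would introduce the symmetrised functional $G\colon K\to\RR$, $G(x)=d(x,Tx)+d(Tx,x)$, and show it is $f$-continuous on $K$. This is the same computation as in the proof of Theorem~\ref{res1}: if $x_n\fc x$ then $T$ being $f$-non-expansive gives $Tx_n\fc Tx$, while the standing hypothesis (that $f$-convergence implies $b$-convergence) yields $x_n\bc x$ and hence $Tx_n\bc Tx$; applying the triangle inequality AD3 forward and backward then bounds $\vert G(x)-G(x_n)\vert$ and forces it to $0$. I keep the symmetrised $G$ (rather than $d(x,Tx)$ alone) precisely so that a zero of $G$ is an honest fixed point through axiom AD2 in \emph{both} directions.

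The decisive step is to prove $\inf_{x\in K}G(x)=0$ by manufacturing approximate fixed points, and this is where I expect the main difficulty to lie. Starting from $u$, I would iterate $T_\lambda$, setting $y_0=u$ and $y_{k+1}=T_\lambda y_k$; the contraction estimate gives $d(y_k,y_{k+1})\le c^{k}d(y_0,y_1)$ and $d(y_{k+1},y_k)\le c^{k}d(y_1,y_0)$. Writing $Ty_k-y_k=\lambda(Ty_k-u)+(y_{k+1}-y_k)$ and its mirror $y_k-Ty_k=\lambda(u-Ty_k)+(y_k-y_{k+1})$, the triangle inequality AN4 together with AN3 gives $d(y_k,Ty_k)\le\lambda\Vert Ty_k-u\vert+d(y_k,y_{k+1})$ and $d(Ty_k,y_k)\le\lambda\Vert u-Ty_k\vert+d(y_{k+1},y_k)$. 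Since $K$ is $f$-compact it is bounded (the maps $w\mapsto\Vert w-u\vert$ and $w\mapsto\Vert u-w\vert$ are $f$-continuous, hence bounded on $K$ by Proposition~\ref{prop4}), so $R:=\sup_{w\in K}\max\{\Vert w-u\vert,\Vert u-w\vert\}<\infty$ and therefore $G(y_k)\le 2\lambda R+c^{k}\big(d(y_0,y_1)+d(y_1,y_0)\big)$. Given $\varepsilon>0$, choosing $\lambda$ so small that $2\lambda R<\varepsilon/2$ and then $k$ so large that the second summand is below $\varepsilon/2$ yields $G(y_k)<\varepsilon$; hence $\inf_K G=0$. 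The point of this route is that it uses only open-cover $f$-compactness (through boundedness) and never sequential $f$-compactness, which need not follow from $f$-compactness in the asymmetric setting.

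Finally, since $G$ is $f$-continuous and $K$ is $f$-compact, Proposition~\ref{prop6} provides $x_0\in K$ at which $G$ attains its minimum; as this minimum equals $\inf_K G=0$ and $G$ is a sum of two non-negative terms, I obtain $d(x_0,Tx_0)=0$ and $d(Tx_0,x_0)=0$, whence $Tx_0=x_0$ by axiom AD2. The only places the hypotheses are genuinely used are convexity (for $T_\lambda(K)\subseteq K$), the non-expansiveness together with the implication $f\Rightarrow b$ convergence (for the $f$-continuity of $G$), and $f$-compactness (for boundedness and for the Weierstrass step), so the argument stays entirely within the tools already established.
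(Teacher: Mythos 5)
Your proposal is correct, and it takes a genuinely different route from the paper's. The paper uses the same convex approximations $S_n x=(1-\tfrac1n)Tx+\tfrac1n x_0$, but then invokes the Banach-type contraction principle (after deducing $f$-completeness of $K$ from its $f$-compactness) to get \emph{exact} fixed points $x_n$ of $S_n$, and extracts an $f$-convergent subsequence of $\{Tx_n\}$ to produce the fixed point of $T$ in the limit. That extraction step uses \emph{sequential} $f$-compactness of $K$, a property the paper itself warns (Section~\ref{Sec2}) need not coincide with open-cover $f$-compactness in asymmetric spaces. Your argument sidesteps exactly this point: you never need exact fixed points of $T_\lambda$ (finitely many iterates suffice to make the symmetrised displacement $G(x)=d(x,Tx)+d(Tx,x)$ small), and you convert ``$\inf_K G=0$'' into an actual fixed point by the Weierstrass-type Proposition~\ref{prop6}, which only requires open-cover $f$-compactness and the $f$-continuity of $G$ (itself obtained from non-expansiveness plus the standing hypothesis that $f$-convergence implies $b$-convergence, exactly as in the proof of Theorem~\ref{res1}). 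The symmetrisation of $G$ is the right choice, since a zero of $G$ gives $Tx_0=x_0$ via AD2 in both directions. What the paper's route buys is brevity and a closer parallel to the classical Schauder-type argument; what yours buys is independence from the contraction principle and from completeness, and --- more significantly --- a proof that remains valid without assuming that $f$-compactness yields $f$-convergent subsequences, so it is arguably the more robust argument in the asymmetric setting.
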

\begin{proof}
Let $x_{0}\in K$. Let us define the following sequence of functions
    \begin{equation*}
    S_{n}x=\left(1-\frac{1}{n}\right)Tx+\frac{1}{n}x_{0},\quad\text{for each}\quad n\in\N,\quad x\in K.
    \end{equation*}
Then $S_{n}(K)\subseteq K$, for every $n\in\N$. Let  $C_{n}=\left(1-\frac{1}{n}\right)$, note that $0<C_{n}<1$, for all $n \in \mathbb{N}$, then
    \begin{equation*}
    \| S_{n}y-S_{n}x \mid =C_{n}\left\Vert Ty-Tx\right\vert \leq C_{n}\| y-x\mid.
    \end{equation*}
This implies that $S_{n}: K\to K$ is an $f-$contraction, for every $n\in \N$. Since $K$ is $f-$compact, Proposition~4.8 in \cite{ColZim} proves that $K$ is $f-$complete and by Theorem~\ref{PCBA1}, $S_{n}$ has a fixed point $x_{n}\in K$.
     On the other hand, taking into account that $K$ is $f-$compact and $T(K)\subset K$, there exists a subsequence $\lbrace Tx_{n_{j}}\rbrace$ and $u\in K$ such that $Tx_{n_{j }} \stackrel{f}{\rightarrow} u$, when $j\to \infty$. Even more,
\begin{equation*}
\left\Vert x_{n_{j}}-u\right\vert\leq \left(1-\frac{1}{n_{j}}\right) \left\Vert Tx_{n_{j}}-u\right\vert +\frac{1}{n_{j}}\left(\left\Vert  x_{0}\right\vert+\left\Vert -u\right\vert \right).
%\left\Vert x_{n_{j}}-u\right\vert\leq \left\Vert Tx_{n_{j}}-u\right\vert +\frac{1}{n_{j}}\left\Vert  Tx_{n_{j}}\right\vert +\frac{1}{n_{j}}\left\Vert  x_{0}\right\vert \\
 %  \leq \left(1+\frac{1}{n_{j}}\right) \left\Vert Tx_{n_{j}}-u\right\vert +\frac{1}{n_{j}}\left(\left\Vert  x_{0}\right\vert+\left\Vert  u\right\vert \right).
    \end{equation*}
    So $x_{n_{j}}\stackrel{f}{\rightarrow} u$, when $j\to\infty$. Therefore $Tx_{n_{j}} \stackrel{f}{\rightarrow} Tu$, when $j\to \infty$. Thus, $u$ is a fixed point of $T$.
\end{proof}

We have the following consequence.

\begin{theorem}\label{Teo3}
Let $X$ be an $f-$Banach space, and let $K$ be an $f-$closed, $f-$bounded, convex and non-empty subset of $X$. If $T:K \rightarrow K$ is $f-$non-expansive, $(T-I)(K)$ is an $f-$closed subset of $X$ and the $f-$convergence implies $b-$convergence, then $T$ has a fixed point at $K$.
\end{theorem}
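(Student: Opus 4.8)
The plan is to reuse the contraction scheme from the proof of Theorem~\ref{res2} and then to exploit the $f$-closedness of $(T-I)(K)$ in place of the compactness that is no longer available. First I would note that $K$ is $f$-complete: since $X$ is $f$-Banach and $K$ is $f$-closed, any $f$-Cauchy sequence in $K$ is $f$-Cauchy in $X$, hence $f$-convergent, and its limit remains in $K$ by $f$-closedness.

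Fixing $x_0\in K$, for each $n\in\N$ I define
\[
S_n x=\left(1-\frac{1}{n}\right)Tx+\frac{1}{n}x_0,\qquad x\in K.
\]
Convexity of $K$ gives $S_n(K)\subseteq K$, and the $f$-non-expansiveness of $T$ together with AN3 yields
\[
d(S_nx,S_ny)=\left(1-\frac{1}{n}\right)\Vert Ty-Tx\vert\leq\left(1-\frac{1}{n}\right)d(x,y),
\]
so each $S_n$ is an $f$-contraction with constant $C_n=1-\frac{1}{n}<1$. Since $K$ is $f$-complete and $f$-convergence implies $b$-convergence there, Theorem~\ref{PCBA1} provides a fixed point $x_n\in K$ of $S_n$.

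The crucial step is to control the residual $(T-I)x_n=Tx_n-x_n$. The fixed-point identity $x_n=(1-\frac{1}{n})Tx_n+\frac{1}{n}x_0$ rearranges to $(T-I)x_n=\frac{1}{n}(Tx_n-x_0)$, so by AN3 and AN4
\[
\Vert (T-I)x_n\vert=\frac{1}{n}\Vert Tx_n-x_0\vert\leq\frac{1}{n}\left(\Vert Tx_n\vert+\Vert -x_0\vert\right).
\]
Because $K$ is $f$-bounded, there are $x\in X$ and $r>0$ with $\Vert u-x\vert<r$ for all $u\in K$, whence $\Vert u\vert\leq\Vert u-x\vert+\Vert x\vert<r+\Vert x\vert$; in particular $\Vert Tx_n\vert$ is bounded uniformly in $n$. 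Therefore $\Vert (T-I)x_n\vert\to0$, which says exactly that the sequence $\{(T-I)x_n\}\subset(T-I)(K)$ is $f$-convergent to $0$. As $(T-I)(K)$ is $f$-closed and the $f$-balls form a countable local base (so $f$-closedness is sequential), we conclude $0\in(T-I)(K)$; hence there is $y\in K$ with $Ty-y=0$, i.e.\ $Ty=y$.

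The main obstacle is conceptual rather than computational: without $f$-compactness one cannot extract an $f$-convergent subsequence of $\{x_n\}$ as was done in Theorem~\ref{res2}. The idea is to transfer the limiting argument to the images $(T-I)x_n$, whose asymmetric norms tend to $0$ automatically, and to let the closedness of $(T-I)(K)$ upgrade ``$0$ is an $f$-limit of residuals'' to ``$0$ is an actual residual''. The points requiring care are verifying that $f$-boundedness of $K$ bounds $\Vert Tx_n\vert$ uniformly and that the forward topology governing the convergence of the residuals is the same one in which $(T-I)(K)$ is assumed to be closed.
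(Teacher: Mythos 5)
Your proof is correct and follows essentially the same route as the paper's: approximate $T$ by $f-$contractions, obtain their fixed points $x_{n}$ via Theorem~\ref{PCBA1}, show $\Vert (T-I)x_{n}\vert\to 0$ using the $f-$boundedness of $K$, and invoke the $f-$closedness of $(T-I)(K)$ to conclude $0\in (T-I)(K)$. The only divergence is your choice of approximants $S_{n}x=\left(1-\frac{1}{n}\right)Tx+\frac{1}{n}x_{0}$, which lie in $K$ by convexity, whereas the paper uses $T_{n}x=t_{n}Tx$, which is a self-map of $K$ only under an extra assumption such as $0\in K$ --- so your version is in fact the more careful one.
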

\begin{proof}
Let $x_{0}\in K$ be given. Then, there exists $r>0$ such that $K\subseteq B^{f}(x_{0},r)$. Thus,
\begin{equation}
    \Vert Tx-Tx_{0}\vert\leq\Vert x-x_{0}\vert<r,\quad\text{for all} \quad x\in K.
\end{equation}
Consider the sequence $\left\lbrace t_{n}\right\rbrace$ given by $t_{n}=\frac{n}{n+1}$. Note that $0<t_{n}<1$ and $t_{n} \rightarrow 1$. For all $n\in \mathbb{N}$, let us define the mapping $T_{n}: K\rightarrow K$ by means of $T_{n}x=t_{n}Tx$. We will prove that $T_{n}$ is an $f-$contraction. Indeed, for all $x,y\in K$
\begin{equation*}
 d(T_{n}x,T_{n}y)=t_{n}\Vert Ty-Tx\vert \leq t_{n}\Vert y-x\vert=t_{n}d(x,y).
\end{equation*}
Then, from Theorem~\ref{PCBA1}, for every $n\in\N$ there exists a unique $x_{n}\in K$ such that $T_{n}x_{n}=x_{n}$.

On the other hand, let $\left\lbrace y_{n}\right\rbrace$ be the sequence in $(T-I)(K)$ given by $y_{n}=(T-I)(x_{n})$. We shall verify that $\displaystyle{y_{n} \stackrel{f}{\rightarrow} 0}$. In fact,
\begin{align*}
  d(0,y_{n})&=d(0,(Tx_{n}-x_{n}))=\left\Vert Tx_{n}-T_{n}x_{n}\right\vert\\
      &=\left\Vert Tx_{n}-t_{n}Tx_{n}\right\vert=\left(1-t_{n}\right)\left\Vert T x_{n}\right\vert\\
      &\leq\left(1-t_{n}\right)\left(\left\Vert Tx_{n}-Tx_{0}\right\vert + \left\Vert Tx_{0}\right\vert\right)\\
      &\leq 2\left(1-t_{n}\right)\max\left\lbrace r, \left\Vert Tx_{0}\right\vert\right\rbrace.
\end{align*}
    Since $t_{n} \rightarrow 1$, then $\displaystyle{y_{n} \stackrel{f}{\rightarrow} 0}$. Therefore, $0\in (T-I)(K)$, which implies that there exists $x_{0}\in K$ such that $(I-T)(x_{0})=0$. Hence, $0=(T-I)(x_{0})=Tx_{0}-x_{0}$. Thus, $T$ has a fixed point at $K$, as desired.
 \end{proof}

The following result is a consequence of Theorem~\ref{PCBA1} and it will be a fundamental tool in the proof of the Theorem~\ref{Kirk}.
\begin{lemma}\label{SAPF}
Let $X$ be an $f-$Banach space, and let $K\subseteq X$ be a non-empty, convex, $f-$closed and $f-$bounded subset. Consider $T: K\to K$ an $f-$non-expansive mapping. Suppose that $f-$convergence implies $b-$convergence, then there exists a sequence $\lbrace x_{n}\rbrace\subset K$
such that
\begin{equation}\label{approx}
\lim_{n\to \infty}\Vert x_{n}-Tx_{n}\vert = 0.
\end{equation}
This sequence will be called a fixed-point $f-$approximating sequence for $T$ in $K$.
\end{lemma}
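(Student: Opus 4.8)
The plan is to obtain the approximating sequence as the family of fixed points of a sequence of auxiliary $f$-contractions, in the spirit of the first half of the proof of Theorem~\ref{Teo3}. First I would fix $x_{0}\in K$; since $K$ is $f$-bounded there is $r>0$ with $K\subseteq B^{f}(x_{0},r)$, so that $\Vert x-x_{0}\vert<r$ for every $x\in K$. Because $X$ is $f$-Banach and $K$ is $f$-closed, $K$ is itself $f$-complete (an $f$-Cauchy sequence in $K$ $f$-converges in $X$ to a limit that $f$-closedness keeps in $K$), which is exactly what lets Theorem~\ref{PCBA1} be invoked on $K$.

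For each $n\in\N$ I would introduce the map
\begin{equation*}
S_{n}x=\left(1-\tfrac{1}{n}\right)Tx+\tfrac{1}{n}x_{0},\qquad x\in K.
\end{equation*}
Convexity of $K$ together with $Tx,x_{0}\in K$ forces $S_{n}(K)\subseteq K$, and since the $x_{0}$-terms cancel in $S_{n}y-S_{n}x$, a one-line computation gives $\Vert S_{n}y-S_{n}x\vert=(1-\tfrac1n)\Vert Ty-Tx\vert\le(1-\tfrac1n)\Vert y-x\vert$, so each $S_{n}$ is an $f$-contraction with constant $1-\tfrac1n<1$. Theorem~\ref{PCBA1} then produces a unique fixed point $x_{n}\in K$ with $x_{n}=(1-\tfrac1n)Tx_{n}+\tfrac1n x_{0}$, and rearranging yields $Tx_{n}-x_{n}=\tfrac1n(Tx_{n}-x_{0})$.

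I would then estimate in the forward direction only. By the triangle inequality (AN4) and $f$-non-expansiveness, $\Vert Tx_{n}-x_{0}\vert\le\Vert Tx_{n}-Tx_{0}\vert+\Vert Tx_{0}-x_{0}\vert\le\Vert x_{n}-x_{0}\vert+\Vert Tx_{0}-x_{0}\vert<r+\Vert Tx_{0}-x_{0}\vert$, a bound independent of $n$. Hence $\Vert Tx_{n}-x_{n}\vert=\tfrac1n\Vert Tx_{n}-x_{0}\vert\to0$, i.e.\ the sequence $y_{n}:=Tx_{n}-x_{n}$ satisfies $y_{n}\fc 0$.

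The main obstacle is that this argument controls only the forward norm $\Vert Tx_{n}-x_{n}\vert$, whereas the lemma asks for the reversed quantity $\Vert x_{n}-Tx_{n}\vert$, and in an asymmetric norm these need not coincide; indeed, estimating $\Vert x_{n}-Tx_{n}\vert=\tfrac1n\Vert x_{0}-Tx_{n}\vert$ directly would require bounding the backward distance $\Vert x_{0}-x_{n}\vert$, which $f$-boundedness does not furnish. This is precisely the point where the standing hypothesis that $f$-convergence implies $b$-convergence enters: applying it to $y_{n}\fc 0$ gives $y_{n}\bc 0$, that is $\Vert -y_{n}\vert\to0$. Since $-y_{n}=x_{n}-Tx_{n}$, this is exactly $\lim_{n\to\infty}\Vert x_{n}-Tx_{n}\vert=0$, as required.
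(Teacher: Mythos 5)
Your proposal is correct and follows essentially the same route as the paper: the same auxiliary contractions $S_{n}x=(1-\tfrac1n)Tx+\tfrac1n x_{0}$, the same appeal to Theorem~\ref{PCBA1} for their fixed points, a uniform forward bound on $\Vert Tx_{n}-x_{n}\vert$ from $f$-boundedness and non-expansiveness, and the same final use of the hypothesis that $f$-convergence implies $b$-convergence to pass to $\Vert x_{n}-Tx_{n}\vert$. The only cosmetic difference is that the paper chooses the anchor point as $a_{0}=Tb$ (with $b$ the center of the $f$-ball containing $K$) so that non-expansiveness bounds $\Vert Tx_{n}-a_{0}\vert$ directly, whereas you use a generic $x_{0}$ and absorb the extra term $\Vert Tx_{0}-x_{0}\vert$ via the triangle inequality.
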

\begin{proof}
Let $a_{0}\in K$. Define the mapping $S_{n}:K\to K$, given by
\begin{equation*}
S_{n}x=\frac{a_{0}}{n}+\left(1-\frac{1}{n}\right)Tx,\quad\text{if}\quad n\geq 2.
\end{equation*}
It is easy to see that $S_{n}$ is an $f-$contraction for every $n\geq 2$. Then, by Theorem~\ref{PCBA1}, we can state that $S_{n}$ has a unique fixed point $x_{n}$ for each $n\geq 2$.

Now, because $S_{n}x_{n}=x_{n}$, we can write
\begin{equation*}
x_{n}=\frac{a_{0}}{n}+\left(1-\frac{1}{n}\right)Tx_{n},\quad\text{if}\quad n\geq 2.
\end{equation*}

We know that there exists $b\in K$ and $M>0$ such that $d(b,x)=\Vert x-b\vert<M$, for all $x\in K$, because $K$ is $f-$bounded. Also,
\begin{align*}
\Vert Tx_{n}-x_{n}\vert=\frac{1}{n}\left\Vert Tx_{n}-a_{0}\right\vert.
\end{align*}
Since $K$ is $T-$invariant, we can choose $a_{0}$ in such a way that $a_{0}=Tb$. Then, taking into account that $T$ is $f-$non-expansive, we obtain
\begin{align*}
\Vert Tx_{n}-x_{n}\vert &=\frac{1}{n}\left\Vert Tx_{n}-a_{0}\right\vert=\frac{1}{n}\left\Vert Tx_{n}-Tb\right\vert \leq \frac{1}{n}\left\Vert x_{n}-b\right\vert\leq \left(\frac{1}{n}\right)M.
\end{align*}
Thus, $\Vert Tx_{n}-x_{n}\vert\to 0$ when $n\to \infty$. Now, considering the sequence \mbox{$\{ z_{n}=Tx_{n}-x_{n}\}\subseteq X$}, and the fact that $f-$convergence implies $b-$convergence, we obtain (\ref{approx}).
\end{proof}

Next, we present an asymmetric version of the Goebel-Karlovitz Lemma, which together with Lemma~\ref{SAPF}, constitute the central part of the proof of Theorem~\ref{Kirk}.

\begin{lemma}[Goebel-Karlovitz type Lemma]\label{GK}
Let $X$ be an $f-$Banach space, $K\subset X$ non-empty, convex, $f-$closed, weakly $f-$compact and $f-$bounded. Consider $T:K\to K$ an $f-$non-expansive map. Suppose $f-$convergence implies $b-$convergence. If $K$ is minimal $T-$invariant, then
    \begin{equation*}
        \lim_{n\to \infty} \Vert x_{n}-x\vert=\Diam (K),\quad\text{for all}\quad x\in K,
    \end{equation*}
  where $\lbrace x_{n}\rbrace$ is a fixed-point $f-$approximating sequence for $T$ in $K$.
\end{lemma}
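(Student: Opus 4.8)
The plan is to prove first the apparently weaker assertion that $\limsup_{n\to\infty}\Vert x_{n}-x\vert=\Diam(K)$ for \emph{every} fixed-point $f-$approximating sequence and every $x\in K$, and only at the end upgrade the $\limsup$ to a genuine limit. Write $d=\Diam(K)$ and introduce the type function $\psi:K\to\RR$ given by $\psi(x)=\limsup_{n\to\infty}\Vert x_{n}-x\vert$. Since $x_{n},x\in K$ one has $\psi(x)\le d$, and AN3--AN4 show at once that $\psi$ is convex. Moreover $\Vert x_{n}-x\vert\le\Vert x_{n}-y\vert+\Vert y-x\vert$ gives $\psi(x)\le\psi(y)+\Vert y-x\vert$; choosing $y=x_{m}$ along a sequence with $x_{m}\stackrel{f}{\rightarrow}x$ (so $\Vert x_{m}-x\vert\to0$) yields $\psi(x)\le\liminf_{m}\psi(x_{m})$, that is, $\psi$ is $f-$lower semicontinuous.

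Next I would produce the asymptotic center. Because $\psi$ is convex and $f-$lower semicontinuous, its sublevel sets are convex and $f-$closed, hence weakly $f-$closed by the Mazur type theorem (Theorem~\ref{Mazur}); thus $\psi$ is weakly $f-$lower semicontinuous and attains its infimum $r_{0}=\inf_{x\in K}\psi(x)$ on the weakly $f-$compact set $K$. Let $A=\{x\in K:\psi(x)=r_{0}\}$, a non-empty, convex and $f-$closed set. The key computation is that $A$ is $T-$invariant: for $z\in A$,
\begin{equation*}
\Vert x_{n}-Tz\vert\le\Vert x_{n}-Tx_{n}\vert+\Vert Tx_{n}-Tz\vert\le\Vert x_{n}-Tx_{n}\vert+\Vert x_{n}-z\vert,
\end{equation*}
where $\Vert Tx_{n}-Tz\vert\le\Vert x_{n}-z\vert$ is exactly the $f-$non-expansiveness of $T$; letting $n\to\infty$ and using $\Vert x_{n}-Tx_{n}\vert\to0$ gives $\psi(Tz)\le\psi(z)=r_{0}$, so $Tz\in A$. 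Since $K$ is minimal $T-$invariant, $A=K$, i.e. $\psi\equiv r_{0}$ on $K$.

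It then remains to identify $r_{0}$ with $d$. By weak $f-$compactness the sequence $\{x_{n}\}$ has a weakly $f-$convergent subsequence $x_{n_{k}}\stackrel{f}{\rightharpoonup}z\in K$ (note $K$ is weakly $f-$closed by Theorem~\ref{Mazur}). The weak $f-$lower semicontinuity of the asymmetric norm, applied to $x_{n_{k}}-x\stackrel{f}{\rightharpoonup}z-x$, gives $\Vert z-x\vert\le\liminf_{k}\Vert x_{n_{k}}-x\vert\le r_{0}$ for every $x\in K$ (using $\psi\equiv r_{0}$); that is $r^{b}_{z}(K)\le r_{0}$. I would then set $C=\{y\in K:\Vert y-x\vert\le r_{0}\text{ for all }x\in K\}$, which contains $z$ and is convex and $f-$closed. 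To see $C$ is $T-$invariant, take $y\in C$: the $f-$non-expansiveness gives $\Vert Ty-Tx'\vert\le\Vert y-x'\vert\le r_{0}$ for all $x'\in K$, so the convex $f-$closed set $\{w\in X:\Vert Ty-w\vert\le r_{0}\}$ contains $T(K)$ and hence contains $\overline{\conv}(T(K))=K$ by Lemma~\ref{mch}; thus $Ty\in C$. Minimality forces $C=K$, whence $d=\sup_{x,y\in K}\Vert y-x\vert\le r_{0}$, and combined with $r_{0}\le d$ this yields $r_{0}=d$ and $\psi\equiv d$.

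Finally, to pass from $\limsup$ to $\lim$, fix $x\in K$ and choose a subsequence $\{x_{n_{j}}\}$ with $\Vert x_{n_{j}}-x\vert\to\liminf_{n}\Vert x_{n}-x\vert$. This subsequence is again a fixed-point $f-$approximating sequence, so the assertion just proved gives $\limsup_{j}\Vert x_{n_{j}}-x\vert=d$; hence $\liminf_{n}\Vert x_{n}-x\vert=d=\limsup_{n}\Vert x_{n}-x\vert$ and the limit equals $d$. The main obstacle, and the place where the asymmetric setting genuinely departs from the classical one, is the construction of the asymptotic center: establishing that $\psi$ and the asymmetric norm are weakly $f-$lower semicontinuous, so that the infimum is attained and sublevel sets are weakly $f-$closed, is precisely where the Mazur type theorem and the hypothesis ``$f-$convergence implies $b-$convergence'' become indispensable (the latter is what controls the ``wrong'' direction $\Vert y-y'\vert$ in the lower semicontinuity of $\Vert\cdot\vert$). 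A secondary subtlety is keeping the two directions of the asymmetric norm consistent throughout the non-expansiveness estimates, together with the use of $K=\overline{\conv}(T(K))$ to propagate the bound from $T(K)$ to all of $K$.
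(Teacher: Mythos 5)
Your argument follows the same Goebel--Karlovitz strategy as the paper's proof: use minimality to show that convex, $f$-closed, $T$-invariant sets built from asymptotic radii must equal all of $K$; use Theorem~\ref{Mazur} and weak $f$-compactness to show that the ``Chebyshev'' set $C$ is non-empty; and use $K=\overline{\conv}(T(K))$ (Lemma~\ref{mch}) to show that $C$ is $T$-invariant. Your set $C$ is literally the paper's set $F$, and your final subsequence trick for upgrading the $\limsup$ to a genuine limit is a clean substitute for the paper's discussion of the limit-point set $P(y)'$.

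The one step that does not work as written is the claim that the sublevel sets of $\psi$ are weakly $f$-closed ``by the Mazur type theorem'', which you need in order to attain the infimum $r_{0}$. The inequality available is $\psi(x_{0})\le\psi(y_{m})+\Vert y_{m}-x_{0}\vert$, so you would need the convex combinations $y_{m}$ to satisfy $\Vert y_{m}-x_{0}\vert\to0$; but Theorem~\ref{Mazur} delivers $\Vert x_{0}-y_{m}\vert\le\epsilon$, i.e.\ smallness in the opposite direction, and the standing hypothesis ($f$-convergence implies $b$-convergence) converts the wrong way to close this gap. (For the non-emptiness of $C$ the Mazur estimate is used in the correct direction, $\Vert z-y_{m}\vert\le\epsilon$, so that part of your argument is sound.) Fortunately the detour through the asymptotic center is unnecessary: for each fixed $y_{0}\in K$ the set $A_{y_{0}}=\{x\in K:\psi(x)\le\psi(y_{0})\}$ contains $y_{0}$, is convex, $f$-closed and $T$-invariant by exactly the computation you give, hence equals $K$ by minimality; since $y_{0}$ is arbitrary this already shows that $\psi$ is constant on $K$ (this is precisely the paper's argument with its set $D$), and the remainder of your proof then goes through unchanged.
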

\begin{proof}
Since $K$ is $f-$bounded, there exists $y_{0}\in K$, $M>0$ such that $\Vert x-y_{0}\vert <M$, for all $x\in K$. By Lemma~\ref{SAPF}, there exists $\left\lbrace x_{n}\right\rbrace\subseteq K$ such that
\begin{equation}
\lim_{n\to \infty}\Vert x_{n}-Tx_{n}\vert = 0.
\end{equation}
Consider $\displaystyle s_{0}=\limsup_{n\to \infty}\Vert x_{n}-y_{0}\vert\geq 0$ and
\begin{align}\label{D}
D=\left\lbrace x\in K\,:\,  \limsup_{n\to \infty}\Vert x_{n}-x\vert\leq s_{0} \right\rbrace.
\end{align}
Note that $D\neq\emptyset$, since $y_{0}\in D$. It is easy to see that $D$ is convex, $f-$closed, and $T-$invariant. Then, by the minimality of $K$, we have $D=K$. The same argument is valid if $y_{0}$ is changed by another $y\in K$, thus $D_{y}=K$ whenever $y\in K$.

For $y\in K$, consider the set
\begin{equation}\label{P(y)}
P(y)=\left\lbrace \Vert x_{n}-y\vert\,\vert\,n\in\N  \right\rbrace\subset \RR,
\end{equation}
and denote by $P(y)'$ the set of limit points of $P(y)$. Then, considering $s'\in P(y)'$, there exists $\left\lbrace \beta_{m}(y)\right\rbrace\subset P(y)$ such that $\displaystyle{\beta_ {m}(y)\rightarrow s'}$, that is,
\begin{equation*}
\beta_{m}(y)=\Vert x_{n_{m}}-y\vert\rightarrow s'.
\end{equation*}
Let us prove that $\displaystyle{ \beta_{m}(z)\rightarrow s'}$ for all $z\in K$. Assume the opposite, that is, there exists $z_{0}\in K$ such that $\displaystyle{ \beta_{m}(z_{0})\nrightarrow s'}$. Then, there exists $\left\lbrace m_{j}\right\rbrace$ such that $\displaystyle{\beta_{m_{j}}(z_{0})\rightarrow t}$, with $t\neq s' $. Now, introduce the set
\begin{equation}\label{E}
E=\left\lbrace w\in K\,:\,\limsup_{j\to \infty}\Vert x_{m_{j}}-w\vert\leq \min\left\lbrace t,s'\right\rbrace\right\rbrace,
\end{equation}
note that $z_{0}\in E$, so that $E\neq \emptyset$. With similar arguments we can prove that the set $E$ is also $f-$closed, convex and $T-$invariant. But, the minimality of $K$, implies that $E=K$. This means that $y,z_{0}\in E$.

On the other hand, if $t\neq s'$, then $s'<t$ or $t<s'$. Without loss of generality, suppose that $t<s'$. Since $y\in E$, we have that
\begin{align*}
s'=\limsup_{j\to \infty}\Vert x_{m_{j}}-y\vert\leq \min\left\lbrace t,s'\right\rbrace =t<s',
\end{align*}
which is a contradiction. Therefore, $t=s'$ and $\displaystyle \lim_{m\to \infty}\Vert x_{n_{m}}-z\vert=s'$, for all $z\in K $.

Next, we show that $s'=d$ where $d=\Diam(K)$. Introduce the set
\begin{equation}\label{F}
F=\left\lbrace u\in K\,:\,\Vert u-x\vert\leq s';\quad\text{for each}\quad x\in K\right\rbrace,
\end{equation}
since $K$ is weakly $f-$compact, then we can extract a subsequence $\lbrace x_{n_{j}}\rbrace$, which is weakly $f-$convergent, say $\displaystyle{x_{n_{j}}\stackrel{f}\rightharpoonup w_{0}}$, with $w_{0}\in K$. Then, Theorem~\ref{Mazur} guarantees that $F\neq \emptyset$.

It is not difficult to verify that $F$ is convex and $f-$closed. To show that $F$ is $T-$invariant, consider  $v_{1}\in F$. By Lemma~\ref{mch}, we have that $K=\overline{conv}(T(K))$. Let $v\in K$, and $\epsilon>0$, we choose $\displaystyle v_{0}=\sum_{i=1}^{m}\lambda_{i}Ty_{i}$, with $y_{i}\in K$, $\lambda_{i}>0$, $\displaystyle\sum_{i=1}^{m}\lambda_{i}=1$ and $\Vert v_{0}-v\vert <\epsilon$.
\begin{align*}
\Vert Tv_{1}-v\vert &=\Vert Tv_{1}-v+v_{0}-v_{0}\vert\\
&\leq \Vert Tv_{1}-v_{0}\vert + \Vert v_{0}-v\vert\\
&= \left\Vert Tv_{1}-\sum_{i=1}^{m}\lambda_{i}Ty_{i}\right\vert + \Vert v_{0}-v\vert\\
&\leq s' + \Vert v_{0}-v\vert < s'+\epsilon,
\end{align*}
hence $Tv_{1}\in F$, which shows that $F$ is $T-$invariant.

We have shown that $F$ is a non-empty, $f-$closed, convex, and $T-$invariant subset of $K$. From the minimality of $K$, there follows that $F=K$. Now, suppose that $s'<d=\Diam(K)$. Define $\displaystyle \delta=\frac{s'+d}{2}$, then $s'<\delta<d$. Because $F=K$, we have
\begin{align*}
    \Vert v-x\vert\leq s'<\delta,\quad \text{for all}\quad v,x\in K.
\end{align*}
So,
\begin{align*}
d=\Diam(K)=\sup_{v,x\in K}\{\Vert v-x\vert\}\leq \delta<d,
\end{align*}
which is a contradiction. Therefore, $s'=d=\Diam(K)$.

Finally, taking into account that the set $P(y)\subseteq\RR$ defined in (\ref{P(y)}) is bounded for all $y\in K$, it means that $P(y)'=\lbrace d\rbrace$ for all $y\in K$. It implies that
\begin{equation*}
    \lim_{n\to \infty}\Vert x_{n}-y\vert =s'=\Diam(K), \quad\text{for every}\quad y\in K.  %\qedhere
\end{equation*}
\end{proof}

\begin{theorem}[Kirk type theorem]\label{Kirk}
Let $X$ be an $f-$Banach space, and let \mbox{$K\subset X$} be non-empty, convex, $f-$closed, $f-$bounded and weakly $f-$compact. Consider \mbox{$T:K\to K$} an $f-$non-expansive mapping. Suppose that $K$ has an $f-$normal structure and that $f-$convergence implies $b-$convergence in $X$, then $T$ has a fixed point in $K$.
\end{theorem}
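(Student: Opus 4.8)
This is the classical Kirk fixed-point theorem transported to the asymmetric setting. The classical proof combines two ingredients: a minimal invariant set (obtained from weak compactness via Zorn) together with an approximate fixed-point sequence, and then uses normal structure to rule out the case where the minimal set has positive diameter. Both ingredients are already available in the excerpt — Proposition~\ref{minimal} and Lemma~\ref{mch} give minimal invariant sets and the identity $K=\overline{\conv}(T(K))$, Lemma~\ref{SAPF} provides a fixed-point $f-$approximating sequence, and Lemma~\ref{GK} (the Goebel--Karlovitz type lemma) gives the crucial limit $\lim_{n}\Vert x_n-x\vert=\Diam(K)$ for every $x$ in a minimal set. So the proof is essentially an assembly of these pieces, with normal structure supplying the final contradiction.

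\begin{proof}
The plan is to reduce to a minimal $T-$invariant set and derive a contradiction from normal structure unless that set is a singleton. First I would apply Proposition~\ref{minimal}: since $K$ is non-empty, convex and weakly $f-$compact, there exists a minimal $T-$invariant set $D\subseteq K$, which is itself non-empty, convex, $f-$closed and weakly $f-$compact, and satisfies $T(D)\subseteq D$. It suffices to produce a fixed point of $T$ in $D$, since a fixed point in $D$ is a fixed point in $K$. Replacing $K$ by $D$, I may assume from now on that $K$ itself is minimal $T-$invariant; note that $K$ inherits $f-$boundedness, so $\Diam(K)<\infty$ is well defined.

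The key dichotomy is on $d:=\Diam(K)$. I would argue that the minimal set must be a singleton. Suppose, for contradiction, that $\Diam(K)>0$. By Lemma~\ref{SAPF} there is a fixed-point $f-$approximating sequence $\{x_n\}\subseteq K$ with $\Vert x_n-Tx_n\vert\to 0$, and by the Goebel--Karlovitz type Lemma~\ref{GK}, applied to the minimal set $K$, we obtain
\begin{equation*}
\lim_{n\to\infty}\Vert x_n-x\vert=\Diam(K)=d,\qquad\text{for all }x\in K.
\end{equation*}
Now I would invoke forward normal structure on $K$: since $K$ is convex, bounded and has $\Diam(K)=d>0$, there is a forward non-diametral point $u\in K$, i.e.\ a point with $r^{f}_{u}(K)=\sup\{\Vert v-u\vert:v\in K\}<d$. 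Taking $x=u$ in the displayed limit gives $\lim_{n}\Vert x_n-u\vert=d$; but every term satisfies $\Vert x_n-u\vert\le r^{f}_{u}(K)<d$, so the limit cannot exceed $r^{f}_{u}(K)$, and we reach $d\le r^{f}_{u}(K)<d$, a contradiction. Therefore $\Diam(K)=0$, which forces $K$ to be a single point $\{x_0\}$.

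Finally, since $T(K)\subseteq K=\{x_0\}$, we conclude $Tx_0=x_0$, so $x_0$ is a fixed point of $T$ in the original set, as desired. The main obstacle in this argument is not the assembly itself but ensuring the hypotheses of the auxiliary lemmas are legitimately inherited by the minimal set $D$: Lemma~\ref{GK} requires $D$ to be non-empty, convex, $f-$closed, weakly $f-$compact, $f-$bounded \emph{and} minimal $T-$invariant, and Lemma~\ref{SAPF} requires $D$ to be $f-$closed, convex and $f-$bounded. The passage from $K$ to $D$ must preserve all of these; weak $f-$compactness of $D$ follows from Proposition~\ref{minimal} (its members of $\mathcal{P}$ are weakly $f-$compact), $f-$closedness follows since weakly $f-$closed sets are $f-$closed, and $f-$boundedness is inherited from $K$ because $D\subseteq K$. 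Once this bookkeeping is checked, the normal-structure contradiction closes the proof immediately.
\end{proof}
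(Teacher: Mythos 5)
Your proposal is correct and follows essentially the same route as the paper: extract a minimal $T-$invariant set via Proposition~\ref{minimal}, feed the approximating sequence from Lemma~\ref{SAPF} into the Goebel--Karlovitz type Lemma~\ref{GK}, and use a forward non-diametral point supplied by the $f-$normal structure to force the minimal set to be a singleton. Your version is in fact slightly more careful than the paper's on two points the paper glosses over, namely that the contradiction passes through $r^{f}_{u}(D)<\Diam(D)$ and that the minimal set inherits all the hypotheses required by the auxiliary lemmas.
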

\begin{proof}
Since $K$ is weakly $f-$compact, by Proposition~\ref{minimal} there follows that $K$ has a subset $D$ which is minimal $T-$invariant. By Lemma~\ref{SAPF}, there exists an $f-$approximating  sequence $\lbrace x_{n}\rbrace$ that is $f-$convergent to the fixed point of $T$ in $D$, such that
\begin{equation*}
\lim_{n\to\infty}\Vert x_{n}-Tx_{n}\vert=0.
\end{equation*}
Then, by Lemma~\ref{GK}, $\displaystyle\lim_{n\to\infty}\Vert x_{n}-x\vert=\Diam(D)$, for all $x\in D$. 

We will see that $D$ is a single element set. Suppose the opposite, that is, $D$ consists of more than one point. Because $D$ is an $f-$closed, convex, non-empty subset of $K$, and since $K$ has an $f-$normal structure, then there exists a forward non-diametral point $x_{0}\in D$, which means that $\sup_{n\in \N}\Vert x_{n}-x_{0}\vert<\Diam (D)$.  Thus,
\begin{equation*}
\lim_{n\to\infty}\Vert x_{n}-x_{0}\vert \leq \sup_{n\in\N}\Vert x_{n}-x_{0}\vert<\Diam (D),
\end{equation*}
but it contradicts Lemma~\ref{GK}. Therefore, $D$ is a single-element set. Now, since $D$ is $T-$invariant, that is, $T(D)\subseteq D$, then this point is necessarily a fixed point of $T$.
\end{proof}
It is important to mention that, in classical theory, there are other fixed point results that use very interesting ideas and techniques that could be extended to the context of asymmetric spaces.  For example, the measure of non-compactness and Darbo's fixed point theorem are not treated in the present work, whose classic versions can be consulted in \cite{AyDoLo}. Also, there is the notion of exterior spaces, which are examples of asymmetric spaces, see \cite{AlKfBest}, on which the possibility of establishing fixed-point results could be considered.

% ------------------------------------------------------------------------

\section*{Acknowledgments}
The first author acknowledges support from project FAB-06-19 and the second author acknowledges support from Conacyt project 45886.

\section*{References}

%\bibliography{mybibfile}

\end{document}